\newtheorem{thm}{Theorem}[section]
\newtheorem{prop}[thm]{Proposition}
\newtheorem{claim}{Claim}
\newtheorem*{thms}{Theorem}
\newtheorem{lem}[thm]{Lemma}
\theoremstyle{definition}
\newtheorem{defn}[thm]{Definition}
\newtheorem{ex}[thm]{Example}
\newtheorem{rem}[thm]{Remark}
\newtheorem{question}[thm]{Question}
\newcommand{\Rset}{\mathbb{R}}
\newcommand{\Nset}{\omega}
\newcommand{\Pset}{\omega^\omega}
\newcommand{\co}{\mathfrak{c}}
\newcommand{\abs}[1]{\lvert#1\rvert}
\newcommand{\seq}[1]{\langle#1\rangle}
\newcommand{\hm}{\mathscr H}
\newcommand{\pack}{\mathscr P}
\newcommand{\leb}{\mathscr L}
\newcommand{\eps}{\varepsilon}
\newcommand{\del}{\delta}
\newcommand{\subs}{\subseteq}
\renewcommand{\leq}{\leqslant}
\renewcommand{\geq}{\geqslant}
\DeclareMathOperator{\hdim}{\dim_{\mathsf{H}}}
\DeclareMathOperator{\pdim}{\dim_{\mathsf{P}}}
\DeclareMathOperator{\diam}{diam}
\DeclareMathOperator{\dist}{dist}
\DeclareMathOperator{\lip}{lip}
\newenvironment{enum}{\begin{enumerate}[\rm(i)]}{\end{enumerate}}
\newenvironment{itemyze}%
  {\begin{list}{\textbullet}{\labelwidth1ex\setlength{\leftmargin}{1.5em}}}%
  {\end{list}}
\newcommand{\si}{$\sigma$\nobreakdash-}
\newcommand{\upto}{{\nearrow}}
\newcommand{\downto}{\downarrow}
\newcommand{\VV}{\mathcal V}
\newcommand{\UU}{\mathcal U}
\newcommand{\NN}{\mathcal{N}}
\newcommand{\FF}{\mathcal{F}}
\newcommand{\EE}{\mathcal{E}}
\newcommand{\JJ}{\mathcal{J}}
\DeclareMathOperator{\add}{\mathbf{add}}
\DeclareMathOperator{\cov}{\mathbf{cov}}
\newcommand{\forinfty}{\forall^\infty}
\newcommand{\exinfty}{\exists^\infty}
\newcommand{\womega}{\widehat\omega}
\begin{document}
\title
[Mapping Analytic sets onto cubes by little Lipschitz functions]
{Mapping Analytic sets onto cubes\\ by little Lipschitz functions}
\author{Jan Mal\'y}
\address{Jan Mal\'y\\
Department of Mathematics\\
Faculty of Science,
 J. E. Purkyn\v e University\\
\v Cesk\'e ml\'ade\v{z}e 8\\
400 96 \'Ust\'{\i} nad Labem\\
Czech Republic}
\email{maly@karlin.mff.cuni.cz}
\author{Ond\v rej Zindulka}
\address
{Ond\v rej Zindulka\\
Department of Mathematics\\
Faculty of Civil Engineering\\
Czech Technical University\\
Th\'akurova 7\\
160 00 Prague 6\\
Czech Republic}
\email{ondrej.zindulka@cvut.cz}
\urladdr{http://mat.fsv.cvut.cz/zindulka}
\subjclass[2000]{28A78, 26A16, 28A05}
\keywords{Packing measure, packing dimension, little Lipschitz map,
lower Lipschitz map, analytic set}

\begin{abstract}
A mapping $f:X\to Y$ between metric spaces is called \emph{little Lipschitz}
if the quantity
$$
  \lip\!f(x)=\liminf_{r\to0}\frac{\diam f(B(x,r))}{r}
$$
is finite for every $x\in X$.

We prove that if a compact (or, more generally, analytic) metric space
has packing dimension greater than $n$, then $X$ can be mapped
onto an $n$-dimensional cube by a little Lipschitz function.

The result requires two facts that are interesing in their own right.
First, an analytic metric space $X$ contains, for any $\eps>0$,
a compact subset $S$ that embeds into an ultrametric space by a Lipschitz map,
and $\pdim S\geq\pdim X-\eps$.
Second, a little Lipschitz function on a closed subset admits a little
Lipschitz extension.
\end{abstract}

\maketitle

\section{Introduction}

For a mapping $f:X\to Y$ between metric spaces consider the
\emph{lower scaled oscillation} of $f$ at $x$
$$
  \lip\! f(x)=\liminf_{r\to0}\frac{\diam f(B(x,r))}{r}.
$$
Recently there has been a lot of interest in the behavior of functions
at points where $\lip\! f(x)$ is finite and in particular in functions
that have only a few points with $\lip\! f(x)=\infty$.

Differentiability of such functions is studied, e.g., in~\cite{MR2213746,MR3511937}.
The structure of exceptional set  and typical behavior of lower
scaled oscillation is investigated in \cite{BHRZ}
and preservation of measures and dimensions is examined in \cite{HPZZ}.

Keleti, M\'ath\'e and the author of this paper studied in~\cite{MR3159074}
a seemingly totally unrelated question --
what metric spaces can be mapped onto
a cube in an $n$-dimensional Euclidean space by a Lipschitz function.
In particular, they proved that if $X$ is an analytic metric space with Hausdorff
dimension greater than $n$, then $X$ can be mapped onto an $n$-dimensional
cube by a Lipschitz function. The crucial ingredients of this result are
\begin{itemyze}
\item a theorem by Mendel and Naor~\cite{MR3032324} by which every analytic metric
space $X$ contains a Lipschitz copy $S\subs X$ of an ultrametric space
whose Hausdorff dimension is large,
\item a classical theorem that a Lipschitz function on a subset
has a Lipschitz extension over the whole space.
\end{itemyze}

Call a function $f$ on a metric space $X$ \emph{little Lipschitz}
if $\lip\!f(x)$ is finite for each point $x\in X$.
The goal of the present paper is to show that a similar mapping theorem holds
for little Lipschitz functions if Hausdorff dimension is replaced with
the packing dimension. The overall structural pattern of the proof
is very similar to that in~\cite{MR3159074}. We were even able to reiterate
ideas underlying some theorems, in particular the use of the notion of monotone space
introduced in \cite{MR2957686} and \cite{MR2979649}.
However, some key ideas had to be worked out from scratch.
In particular, the counterparts of the two above ingredients that we could use
in~\cite{MR3159074} as black boxes
were unavailable for the context of packing dimension and little Lipschitz functions.
Their formulation and proofs form important parts of the present paper that
are of independent interest.

The first main result of the paper is the following theorem.
Denote by $\pdim$ the packing dimension (precise definition is recalled below).
\begin{thms}
If $X$ is an analytic metric space such that $\pdim X>s$,
then there is a set $S\subs X$ such that $\pdim S>s$ and $S$ is Lipschitz
equivalent to an ultrametric space.
\end{thms}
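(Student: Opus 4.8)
The plan is to reduce the statement to a construction inside a compact set whose upper box dimension is large on every ball, and then to build a Cantor-type tree on it whose branchings are organized as ultrametric skeletons.

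First, the reduction to a box-regular compact kernel. Fix $s<s''<\pdim X$. Since $X$ is analytic and $\pack^{s''}(X)=\infty$, a theorem of Joyce and Preiss furnishes a compact $K\subs X$ with $0<\pack^{s''}(K)<\infty$, so that $\pdim K\geq s''>s$; this is where analyticity is spent, and henceforth I work with the compact $K$. Next I would pass to the \emph{box-regular kernel} of $K$: remove from $K$ the union of all relatively open balls $B$ with $\overline{\dim}_B(K\cap B)\leq s''$, and iterate this removal transfinitely. A Cantor--Bendixson argument shows the process stabilises after countably many steps; the total removed set is a countable union of sets of upper box dimension $\leq s''$ and hence has packing dimension $\leq s''$, so the residual compact set $E$ still satisfies $\pdim E\geq s''$ (in particular $E\neq\varnothing$), while now $\overline{\dim}_B(E\cap B(x,r))>s''$ for \emph{every} $x\in E$ and $r>0$.

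Second, the branching by ultrametric skeletons. I would build a tree of balls centred in $E$. At a node $\sigma$ carrying a ball $B(x_\sigma,\rho_\sigma)$, the inequality $\overline{\dim}_B(E\cap B(x_\sigma,\rho_\sigma))>s''$ supplies arbitrarily small scales $\delta$ at which $E\cap B(x_\sigma,\rho_\sigma)$ contains a $\delta$-separated set $P$ of cardinality $\geq\delta^{-s''}$. Fixing once and for all a small $\eps>0$, I apply the finitary ultrametric skeleton theorem of Mendel and Naor \cite{MR3032324} to $P$: it extracts a subset $U_\sigma\subs P$ with $\abs{U_\sigma}\geq\abs P^{1-\eps}\geq\delta^{-s''(1-\eps)}$ that is $D$-bi-Lipschitz to an ultrametric, where the distortion $D=D(\eps)$ is \emph{independent of $P$ and of the scale}. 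The children of $\sigma$ are the balls $B(y,\delta/20)$, $y\in U_\sigma$; they are pairwise disjoint and $\delta$-separated, each is again a ball centred in $E$, and the recursion continues. Let $S=\bigcap_n\bigcup_{\abs\sigma=n}\overline B(x_\sigma,\rho_\sigma)$ be the resulting compact subset of $E$. Because each child ball has radius at most $\delta_\sigma/20$, the piece of $S$ hanging below a point $y\in U_\sigma$ has diameter smaller, by a fixed factor, than the least distance occurring inside $U_\sigma$, so the internal scales of successive levels are separated by a uniform factor; combining this with the fact that every $U_\sigma$ is $D$-bi-Lipschitz to an ultrametric for one and the same $D$, a routine gluing lemma for nested, scale-separated ultrametric families shows that $S$ is $D'$-bi-Lipschitz to an ultrametric space with $D'=D'(D)$ only. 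For the dimension I equip $S$ with the branching measure $\mu$ that splits each node's mass equally among its children; at scale $r\approx\delta_\sigma/20$ the ball $B(x,r)$ around $x\in S$ meets only one child of $\sigma$, whence $-\log\mu(B(x,r))\geq\sum_{\tau\preceq\sigma}\log\abs{U_\tau}\geq s''(1-\eps)\log(1/\delta_\sigma)$, and dividing by $-\log r$ and letting $\abs\sigma\to\infty$ gives $\limsup_{r\to0}\log\mu(B(x,r))/\log r\geq s''(1-\eps)$ for every $x\in S$. The measure-theoretic lower bound for packing dimension then yields $\pdim S\geq s''(1-\eps)$, and choosing $\eps$ small enough that $s''(1-\eps)>s$ finishes the argument.

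The delicate point is the branching together with the gluing: one needs branchings that are simultaneously rich enough to carry packing dimension $>s$ (cardinality $\geq\delta^{-s''(1-\eps)}$) and structured with a \emph{uniform} distortion across all scales, so that the local skeletons assemble into a single global ultrametric. A naive net hierarchy fails here, since points straddling the boundary between two coarse cells acquire an ultrametric distance far exceeding their true distance; it is precisely the dimension-free, cardinality version of the skeleton theorem that removes this obstruction by discarding a controlled proportion of points. Verifying that the $\limsup$ defining packing dimension is actually attained at the branching scales --- rather than being diluted along the many intermediate scales --- is the packing-specific subtlety that has no counterpart in the Hausdorff-dimension argument of \cite{MR3159074}.
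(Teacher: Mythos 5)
Your route is genuinely different from the paper's. The paper treats the \emph{measured} ultrametric skeleton theorem, \cite[Theorem 1.5]{MR3032324}, as a black box: it manufactures a packing--Frostman measure $\mu\le\pack^p$ (Joyce--Preiss plus the density inequalities), feeds $\mu$ to that theorem to obtain a single compact Lipschitz-ultrametric $S$ in one stroke, and then proves $\pack^t(S)=\infty$ by extracting a Vitali-type packing from the balls where the Frostman estimate holds and playing it against the inequality $\sum_i(\mu B(x_i,c_\eps r_i))^{1-\eps}\ge(\mu X)^{1-\eps}$. You instead rebuild the skeleton by hand from the \emph{finite} nonlinear Dvoretzky theorem, via a box-regular compact kernel, a tree of separated balls carrying finite skeletons, a gluing lemma, and a mass-distribution principle for packing dimension. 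Your version is more self-contained at the level of infinite metric spaces (only the finitary Mendel--Naor input is imported), at the price of redoing the gluing and the dimension bookkeeping that Theorem 1.5 packages; the paper's version is much shorter because all of that combinatorics is hidden in the citation. Both arguments spend analyticity in exactly the same place, namely Joyce--Preiss. Your closing remark has the subtlety backwards, though: for \emph{packing} dimension the upper local dimension is a $\limsup$, so verifying it along the branching scales is automatically enough; it is the Hausdorff/$\liminf$ analogue that would be diluted by intermediate scales.

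Two steps do not survive scrutiny as written, although both are repairable. First, the kernel reduction: Joyce--Preiss gives $0<\pack^{s''}(K)<\infty$, hence $\pdim K=s''$ \emph{exactly}, and your removal criterion $\overline{\dim}_B(K\cap B)\le s''$ then permits $E=\varnothing$ (take $K$ self-similar with $\overline{\dim}_B K=\pdim K=s''$; the very first ball is discarded). Knowing that the removed part has packing dimension $\le s''$ does not contradict $\pdim K=s''$, so the claim $\pdim E\ge s''$ does not follow. You must interpolate a third exponent $s<s'<s''$ and remove balls with $\overline{\dim}_B(K\cap B)\le s'$; better still, use $\pdim(K\cap B)\le s'$ as the criterion, since countable stability of packing dimension then makes a single removal step suffice and eliminates the transfinite iteration. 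Second, the gluing: if $x,z$ split at a node $\sigma$ lying at or below a child of an ancestor $\tau$, then $\diam U_\sigma\le\delta_\tau/10$ and the glued value $\rho(x,z)$ can be as large as $D\,\delta_\tau/10$, whereas the values $\rho(x,y)$, $\rho(y,z)$ coming from $U_\tau$ are only guaranteed to be at least $\delta_\tau$ (up to the chosen normalization of $\rho_\tau$); the strong triangle inequality thus forces $D\le10$, which fails since $D=D(\eps)$ is large. The fixed factor $20$ is therefore not the ``uniform factor'' that makes the gluing routine: you must shrink the child radii to something like $\delta_\sigma/(20D^2)$, which your freedom to choose arbitrarily small scales does allow and which leaves the separation and measure estimates intact. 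With these two repairs the argument goes through and yields $\pdim S\ge s'(1-\eps)>s$.
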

This theorem is a perfect packing analogy of~\cite[Theorem 1.4]{MR3032324}.
We derive it from~\cite[Theorem 1.5]{MR3032324} in Section~\ref{sec:MN}.

While extending Lipschitz function is rather easy and straightforward,
an analogous theorem for little Lipschitz functions is harder.
We prove it in Section~\ref{sec:maly}.
\begin{thms}
If $F$ is a closed set in a metric space $X$, then
every little Lipschitz function on $F$ has a little Lipschitz extension
over $X$.
\end{thms}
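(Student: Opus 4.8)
The plan is to build the extension by a Whitney-type construction on the open complement $U=X\setminus F$, and to exploit the fact that, just like $\lip\! f$, the quantity $\lip\! g$ is a \emph{liminf}: to bound $\lip\! g(x)$ at a point $x\in F$ it suffices to control the oscillation of the extension $g$ over $B(x,r)$ for a suitable sequence of radii $r\downto0$, and these may be matched to the ``good'' radii supplied by the finiteness of $\lip\! f(x)$.

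First I would fix a Whitney cover of $U$. For $z\in U$ write $d(z)=\dist(z,F)>0$ and consider the cover $\{B(z,d(z)/10):z\in U\}$ of $U$. Since every metric space is paracompact, this cover has a locally finite open refinement $\{V_i\}$; for each $i$ pick a centre $c_i$ and radius $\rho_i=d(c_i,F)/10$ with $V_i\subs B(c_i,\rho_i)$, together with an approximate nearest point $p_i\in F$ satisfying $\dist(c_i,p_i)<2d(c_i,F)$. Subordinate to $\{V_i\}$ take the Lipschitz partition of unity $\varphi_i=\psi_i/\sum_j\psi_j$ with $\psi_i(\cdot)=\dist(\cdot,X\setminus V_i)$; each $\varphi_i$ is locally Lipschitz on $U$ because near any point the denominator is a finite, strictly positive sum. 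Define
$$
  g=f\quad\text{on }F,\qquad
  g(z)=\sum_i\varphi_i(z)\,f(p_i)\quad\text{for }z\in U .
$$

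Verifying $\lip\! g(x)<\infty$ at a point $x\in U$ is immediate: near $x$ the defining sum is finite and each summand is locally Lipschitz, so $g$ is locally Lipschitz on $U$. The substance lies at $x\in F$. Since $\lip\! f(x)<\infty$, there are a constant $C$ and radii $s_k\downto0$ with $\diam f\bigl(B(x,s_k)\cap F\bigr)\le Cs_k$. I would set $r_k=s_k/K$ for a fixed Whitney constant $K$ and estimate $\diam g\bigl(B(x,r_k)\bigr)$. On $B(x,r_k)\cap F$ the extension equals $f$, so its values lie in $f\bigl(B(x,r_k)\cap F\bigr)$. For $z\in B(x,r_k)\cap U$, any $i$ with $\varphi_i(z)>0$ has $z\in V_i\subs B(c_i,\rho_i)$; from $\rho_i=d(c_i,F)/10$ and $d(z,F)\le d(z,x)<r_k$ one gets $d(c_i,F)<\tfrac{10}{9}r_k$, hence $d(c_i,x)<\tfrac{10}{9}r_k$ and $d(p_i,x)\le\dist(p_i,c_i)+d(c_i,x)<\tfrac{10}{3}r_k$, so $p_i\in B(x,Kr_k)\cap F=B(x,s_k)\cap F$ for $K=4$. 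As $g(z)$ is a convex combination of such values $f(p_i)$, it lies in the convex hull of $f\bigl(B(x,s_k)\cap F\bigr)$, a set of diameter at most $Cs_k=CKr_k$. Combining the two parts yields $\diam g\bigl(B(x,r_k)\bigr)\lesssim r_k$, whence $\lip\! g(x)<\infty$.

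The main obstacle is conceptual rather than computational. For a genuinely Lipschitz datum one controls a Whitney extension through the global estimate $\abs{f(p_i)-f(p_j)}\le L\,\dist(p_i,p_j)$, but a little Lipschitz $f$ supplies no such bound, so the classical scheme breaks down. The device that rescues it is precisely that $\lip$ asks only for a liminf: this lets us work at the sparse radii where $f$ happens to behave well and trap the extension's values inside the range of $f$ over a mildly enlarged ball. Care is needed to fix the Whitney and projection constants \emph{before} choosing $K$, and to confirm that the partition of unity is genuinely locally Lipschitz on $U$; beyond that the verification is routine, and since the estimate bounds the diameter of the full image it applies verbatim to $\Rset^n$-valued data.
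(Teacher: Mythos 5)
Your argument is correct, but it takes a genuinely different route from the paper. The paper builds the extension by an infimal‑convolution (McShane‑type) device: for each $x\in F$ and $r>0$ it forms $s_{x,r}=\sup f(B(x,3r)\cap F)$ and a $\tfrac1r$-Lipschitz ``cone'' $f_{x,r}$ equal to $s_{x,r}$ on $B(x,r)$ and growing linearly outside, then sets $f^*=\min\{\inf_{x,r}f_{x,r},1\}$; the key estimate is $\abs{f^*(y)-f(x)}\leq\widehat\omega_f(x,3r)$ for $y\in B(x,r)$, after which the liminf in $\lip$ is exploited exactly as you exploit it. You instead use a Whitney partition of unity on $X\setminus F$ with approximate nearest‑point data, and your containment $p_i\in B(x,4r)\cap F$ for every index active on $B(x,r)$ gives the analogous modulus bound $\diam g(B(x,r))\leq\diam f(B(x,4r)\cap F)$ at \emph{all} radii, so the same liminf trick closes the proof. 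Each approach buys something: the paper's construction is order‑theoretic and avoids paracompactness, but it requires bounded data (hence the $\arctan$/$e^{-\dist(\cdot,F)}$ correction for unbounded $f$) and is intrinsically scalar, so vector‑valued targets are handled coordinate‑wise in Theorem~\ref{main2}; your convex‑combination estimate needs no boundedness reduction and applies verbatim to $\Rset^n$-valued (indeed Banach‑valued) data, at the cost of invoking Stone's theorem and checking local finiteness. Two small points worth a sentence in a write‑up: continuity of $g$ at points of $F$ does follow from your estimate, since $r\mapsto\diam g(B(x,r))$ is nondecreasing and tends to $0$ along the sequence $r_k$; and local Lipschitzness of $g$ on $X\setminus F$ uses that locally only finitely many $\varphi_i$ are nonzero and $\sum_j\psi_j$ is locally bounded away from $0$, both of which you correctly flag.
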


The main theorem of the paper parallels perfectly \cite[Theorem 2.6]{MR3159074}.
It is proved in Section~\ref{sec:main}.
\begin{thms}
Let $X$ be an analytic metric space. If $\pdim X>n$, then there is a
little Lipschitz surjective mapping $f:X\to[0,1]^n$.
\end{thms}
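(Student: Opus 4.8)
The plan is to follow the three-step pattern of~\cite[Theorem 2.6]{MR3159074}, replacing Hausdorff dimension by packing dimension and Lipschitz maps by little Lipschitz maps, and feeding in the two new ingredients stated above. First I would invoke the first theorem with $s=n$ to produce a compact set $S\subs X$ with $\pdim S>n$ that is Lipschitz equivalent to an ultrametric space $(U,\rho)$, say via a bi-Lipschitz bijection $\varphi\colon S\to U$; since packing dimension is a Lipschitz invariant, $\pdim U>n$ as well. The problem thus reduces to building a little Lipschitz surjection $g\colon U\to[0,1]^n$ on the ultrametric space, transporting it to $S$ by $g\circ\varphi$, extending over $X$, and finally correcting the range.

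The heart of the argument, and the step I expect to be the main obstacle, is the construction of $g$. Here I would exploit the tree structure of the ultrametric space: for a suitable decreasing sequence of scales $r_k\downto0$ the $\rho$-balls of radius $r_k$ form the nodes at level $k$ of a tree whose branches are the points of $U$. The hypothesis $\pdim U>n$ should be turned, after passing to a compact subset, into a \emph{richness of branching along a subsequence of scales}: there are infinitely many levels $k$ at which each relevant node has at least $\approx r_k^{-s}$ children for some fixed $s>n$. I would then define $g$ by a nested digit assignment, subdividing $[0,1]^n$ at each such rich level into $\approx r_k^{-n}$ subcubes of side $\approx r_k$ and distributing the children over the subcubes so that every subcube receives a child; this is possible precisely because $r_k^{-s}\ge r_k^{-n}$. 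Sending a branch to the resulting nested intersection of subcubes yields a map onto $[0,1]^n$, surjectivity following from compactness of $U$ together with the fact that every subcube is hit at every rich level. The point of using the little Lipschitz condition is that it only demands control of $\diam g(B(x,r))/r$ along \emph{some} sequence $r\to0$: at a rich scale $r_k$ the ball $B(x,r_k)$ maps into a single subcube of side $\approx r_k$, so $\diam g(B(x,r_k))\lesssim r_k$ and hence $\lip g(x)<\infty$, while at the remaining (poor) scales the cube need not be subdivided at all and the map simply does not expand. This liminf behaviour is exactly what pairs little Lipschitz maps with packing dimension, and marshalling the scales so that surjectivity and the uniform bound $\diam g(B(x,r_k))\lesssim r_k$ hold simultaneously at every branch is the delicate part; the notion of monotone space from~\cite{MR2957686,MR2979649} is the natural bookkeeping device for the ordering of children and for reading off the $n$ coordinates.

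With $g$ in hand, $g\circ\varphi\colon S\to[0,1]^n$ is little Lipschitz: precomposition with the Lipschitz map $\varphi$ only rescales the scales by a bounded factor, so $\lip(g\circ\varphi)(x)\le L\cdot\lip g(\varphi(x))<\infty$, where $L$ is a Lipschitz constant of $\varphi$. Since $S$ is compact, hence closed in $X$, I would apply the second theorem coordinatewise to extend $g\circ\varphi$ to a map $\tilde f=(\tilde f_1,\dots,\tilde f_n)\colon X\to\Rset^n$ with each $\tilde f_i$ little Lipschitz. The only subtlety is that finiteness of $\lip\tilde f$ for the vector-valued map requires the good scales of the $n$ coordinates to be shared at points of $S$; off $S$ the extension can be arranged to be locally Lipschitz, so that all scales are good there, while on $S$ the values agree with $g\circ\varphi$, whose good scales come from the single construction of $g$ and are therefore common to all coordinates.

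Finally, to turn $\tilde f$ into a map with range exactly $[0,1]^n$, I would postcompose with the nearest point retraction $\pi\colon\Rset^n\to[0,1]^n$, which is $1$-Lipschitz and hence preserves the little Lipschitz property, $\lip(\pi\circ\tilde f)(x)\le\lip\tilde f(x)$. Because $g\circ\varphi$ already takes values in $[0,1]^n$, on which $\pi$ is the identity, we have $\pi\circ(g\circ\varphi)=g\circ\varphi$ on $S$, so $f:=\pi\circ\tilde f$ satisfies $f(X)\supseteq f(S)=(g\circ\varphi)(S)=[0,1]^n$. Thus $f\colon X\to[0,1]^n$ is a little Lipschitz surjection, as required.
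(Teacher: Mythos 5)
Your outer skeleton matches the paper's: extract a compact Lipschitz-ultrametric subset $S$ with $\pdim S>n$, build a little Lipschitz surjection of $S$ onto the cube, extend coordinatewise by the extension theorem, and note that the good scales of the extension at points of $S$ are inherited from the single construction on $S$ (the paper's extension in fact already lands in $[0,1]$, so your retraction $\pi$ is harmless but unnecessary). The problem is the core step, which you yourself identify as the main obstacle and then resolve only by an unjustified claim: that $\pdim U>n$ can be upgraded, after passing to a compact subset, to the existence of infinitely many \emph{common} scales $r_k$ at which \emph{every} node of the tree at level $k$ has $\gtrsim r_k^{-s}$ children. Packing dimension gives no such uniformization. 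Via the Frostman-type lemma for packing measures it gives a finite measure $\mu$ with $\mu B(x,r_{x,i})\leq r_{x,i}^{s}$ along a sequence $r_{x,i}\downto0$ that genuinely depends on the point $x$; different branches may have their "rich" levels at entirely disjoint sets of scales, and nothing forces a compact subset of still-large packing dimension on which the rich levels synchronize. Since your digit-assignment construction needs synchronized rich levels both for surjectivity (every subcube must receive a child of every node at a rich level) and for the bound $\diam g(B(x,r_k))\lesssim r_k$ at every branch, the argument as proposed does not go through.

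The paper's proof circumvents exactly this difficulty by a different mechanism. It uses the packing analogue of Frostman's lemma to get $\mu$ with $\mu B(x,r_{x,i})\leq r_{x,i}^{n}$ along point-dependent scales, then uses that a compact ultrametric space is \emph{monotone}: there is a linear order $<$ and a constant $c$ with $[x,y]\subs B(x,c\,d(x,y))$. Setting $g(x)=\mu\{z\leq x\}$ gives $\omega_g(x,r_{x,i}/c)\leq 2r_{x,i}^{n}$, so $\lip_n g(x)\leq 2c^{n}$ at every point --- the $\liminf$ in the definition absorbs the point-dependence of the good scales, which is precisely what your tree construction cannot do. This produces a \emph{lower $n$-H\"older} surjection $g:S\to[0,1]$ onto an interval (not the cube), and the cube is then reached by composing with a $\frac1n$-H\"older Peano curve $[0,1]\to[0,1]^n$, the composition being lower Lipschitz. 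If you want to salvage your approach, you would either have to prove the uniform-branching extraction (which I do not believe follows from $\pdim X>n$), or replace the digit assignment by something measure-theoretic of this kind.
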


The conclusion fails if the assumption that $X$ is analytic is dropped.
We prove this in section~\ref{sec:ZFC}. We actually show that there are
spaces of large packing dimension that do not map
onto an interval by any continuous function, see Theorem~\ref{add}.
This nicely complements~\cite[Theorem 3.1]{MR3159074} that asserts
that there are spaces of large \emph{Hausdorff} dimension that do not map onto
an interval by a \emph{uniformly} continuous function.

Section~\ref{sec:pack} recalls the notion of packing measure.
Section~\ref{sec:rem} lists a few remarks and open problems.

\medskip

All spaces under consideration are separable metric spaces. Recall that a metric space
is \emph{analytic} if it is a continuous image of a complete metric space
(or, equivalently, of the irrational numbers, or equivalently, a Suslin set in
a complete metric space). A continuous image of an analytic space is analytic.
Every analytic space is separable.

The notion of Lipschitz mapping is well-known. Two metric spaces are
\emph{Lipschitz-equivalent} if there is a Lipschitz bijection with
a Lipschitz inverse.

Some of the common notation includes
$B(x,r)$ for the closed ball centered at $x$, with radius $r$;
$d$ is a generic symbol for a metric;
we write $\diam E$ for the diameter of a set $E$ in a metric space and
$\dist(x,B)$ for the distance from a point $x$ to a set $B$.
Letters $n,m,i,j,k$ are generic symbols for positive integers.
$\Rset$ denotes the real line and $\Rset^n$ the Euclidean space;
$\omega$ stands for the set of natural numbers including zero;
$\Pset$ denotes the set of sequences of natural numbers.
$\abs A$ denotes the cardinality (finite or infinite) of the set $A$.
$X_n\upto X$ means that $\seq{X_n}$ is an increasing sequence of
sets with union $X$.

\section{Packing measures}\label{sec:pack}

In this section we recall the notion of packing measure. There are many definitions;
we adhere to the one from \cite{MR1346667}.

Let $X$ be a metric space and $E\subs X$. Recall that a family of balls
$\{B(x_i,r_i):i\in I\}$ is called a \emph{packing of $E$} if
$x_i\in E$ and $x_j\notin B(x_i,r_i)$ for each $i\neq j$.
If $\del>0$, the packing is called \emph{$\del$-fine} if $r_i\leq\del$ for all
$i$.

Let $s>0$. Define
\begin{align*}
  \pack^s_\del(E)&=\sup\Bigl\{\sum_{i\in I}r_i^s:
  \text{$\{B(x_i,r_i):i\in I\}$ is a $\del$-fine packing of $E$}\Bigr\},\\
  \pack^s_0(E)&=\inf_{\del>0}\pack^s_\del(E).
\end{align*}
The set function $E\mapsto\pack^s_0(E)$ is not an outer measure;
it is subadditive, but not countably subadditive. That is why one more step is required:
The set function
$$
  \pack^s(E)=\inf\sum_n\pack^s_0(E_n)
$$
where the infimum is taken over all countable covers $\{E_n:n\in\Nset\}$ of $E$,
is a countably subadditive Borel regular outer measure whose restriction
to Borel sets is a countably additive Borel measure. It is termed the
\emph{$s$-dimensional packing measure} of $E$.
We refer to the two papers by Edgar~\cite{MR1844397,MR2288081} for a thorough
review of the topic.

\begin{rem}\label{packing}
Our definition of packing coincides with
the definition from \cite{MR1346667}. In literature, packing are often defined
by the requirement that the balls forming it are pairwise disjoint \cite{MR1333890},
or that $r_i+r_j<d(x_i,x_j)$ \cite{MR2288081}.
Our packing is called a \emph{weak packing} in \cite{MR2288081}.
If $\{B(x_i,r_i):i\in I\}$ is a packing according to our definition,
then $\frac12r_i+\frac12r_j>d(x_i,x_j)$ and this in turn implies that
the balls $B(x_i,\frac12 r_i)$ and $B(x_j,\frac 12r_j)$ are disjoint.
None of these implications can be reversed.
\end{rem}

We will utilize the following counterpart of the famous Frostman Lemma for
packing measures:

\begin{lem}\label{FrostmanLemma}
Let $X$ be an analytic metric space and $s>0$. Then $\pack^s(X)>0$
if and only if there is a finite Borel measure $\mu$ on $X$ such that
$\mu(X)>0$ and
\begin{equation}\label{Frostman}
  \forall x\in X\ \exists r_i\downto0 \quad \mu B(x,r_i)\leq r_i^s.
\end{equation}
This measure can be chosen to satisfy $\mu\leq\pack^s$.
\end{lem}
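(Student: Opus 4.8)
The plan is to prove the two implications separately and to observe that the measure produced in the forward implication automatically satisfies $\mu\leq\pack^s$. Both directions rest on the density theorems of Taylor--Tricot type for packing measures, in the form valid in an arbitrary metric space (as developed in the papers of Edgar cited above). The point worth keeping in mind is that, in contrast with the Hausdorff case, it is the \emph{lower} density $\liminf_{r\to0}\mu B(x,r)/r^s$ that controls $\pack^s$; this is precisely why condition~\eqref{Frostman} refers to a single sequence $r_i\downto0$ rather than to all radii.

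For the implication from the measure to $\pack^s(X)>0$ I would use a mass distribution principle. Given $\mu$ as in the statement, fix $\del>0$ and consider the family $\{B(x,r):x\in X,\ r\leq\del,\ \mu B(x,r)\leq r^s\}$, which by~\eqref{Frostman} is a fine cover of $X$. If one can extract from it a countable weak packing $\{B(x_i,r_i)\}$ covering $\mu$-almost all of $X$, then, overlaps only inflating the left-hand sum, $\sum_i r_i^s\geq\sum_i\mu B(x_i,r_i)\geq\mu(X)$, so that $\pack^s_\del(X)\geq\mu(X)$; letting $\del\downto0$ and passing to covers gives $\pack^s(X)\geq\mu(X)>0$. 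The one delicate point is this extraction: a crude $5r$-covering argument is not enough, because~\eqref{Frostman} bounds $\mu$ only on the balls $B(x,r_i)$ and not on their enlargements, so I would invoke the metric density theorem (equivalently, a Vitali-type selection tailored to weak packings) to produce a packing that genuinely exhausts the mass.

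For the converse, assume $\pack^s(X)>0$. The crucial step is to pass to a compact set carrying \emph{finite} positive packing measure: since $X$ is analytic, it contains a compact $K\subs X$ with $0<\pack^s(K)<\infty$. Put $\mu_0=\pack^s\restriction K$, a finite Borel measure with $\mu_0(X)>0$ and $\mu_0\leq\pack^s$. The density theorem applied to $\mu_0$ furnishes a constant $c\geq1$ (depending on $s$ only through the conversion between radius and diameter) with $\liminf_{r\to0}\mu_0 B(x,r)/r^s\leq c$ for $\mu_0$-almost every $x$; replacing $\mu_0$ by $\mu_1:=c^{-1}\mu_0$ we may assume the bound holds with $c=1$, and still $\mu_1\leq\pack^s$. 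To upgrade this almost-everywhere bound to the pointwise statement~\eqref{Frostman}, let $G$ be the Borel set of full $\mu_1$-measure on which the lower density bound holds, choose by inner regularity a compact $K'\subs G$ with $\mu_1(K')>0$, and set $\mu=\mu_1\restriction K'$. Then $\mu\leq\pack^s$ and $\mu(X)>0$; for $x\in K'\subs G$ one has $\mu B(x,r_i)\leq\mu_1 B(x,r_i)\leq r_i^s$ along a suitable $r_i\downto0$, while for $x$ outside $\operatorname{supp}\mu\subs K'$ some ball $B(x,\rho)$ is $\mu$-null, so~\eqref{Frostman} holds in both cases.

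The step I expect to be the main obstacle is the reduction in the converse to a compact subset of finite positive packing measure. This is the packing analogue of the classical subset theorem and is far more subtle than its Hausdorff counterpart --- it is the Joyce--Preiss phenomenon --- and securing it for a general analytic metric space, rather than merely in $\Rset^n$, is where the analyticity of $X$ is genuinely used and where I would concentrate the real work.
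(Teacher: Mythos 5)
Your argument is essentially the paper's: the Joyce--Preiss theorem supplies the compact set of finite positive $\pack^s$-measure, Edgar's density inequalities for packing measure handle both implications (your hands-on packing extraction in the forward direction is just a proof sketch of the upper density inequality the paper cites as a black box), and the restriction to a closed subset plus the support argument upgrades the a.e.\ density bound to the pointwise statement~\eqref{Frostman}. The only slip is quantitative: rescaling by exactly $c^{-1}$ gives $\Theta_*^s(\mu_1,x)\leq 1$, which only yields $\mu_1 B(x,r_i)\leq(1+\eps)r_i^s$ along some $r_i\downto0$ rather than $\leq r_i^s$; rescale by $(2c)^{-1}$ instead (the paper uses the factor $2^{-s-1}$ against its bound $2^s$) so the lower density is $\leq\frac12<1$ and the required strict inequality follows.
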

The proof is a simple application of two theorems about packing
measures. The first one is due to Joyce and Preiss~\cite[Theorem 1]{MR1346667}:
\begin{thm}[{\cite{MR1346667}}]\label{JP}
If $X$ is analytic and $\pack^s(X)>0$, then there is a compact set $C\subs X$
such that $0<\pack^s(C)<\infty$.
\end{thm}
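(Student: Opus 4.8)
The plan is to prove this in two stages: a descriptive-set-theoretic reduction to a compact set of positive packing measure, followed by the genuinely hard extraction of a compact subset whose packing measure is also finite. For the reduction, I would exploit that $\pack^s$ restricted to Borel sets is a metric (hence Borel) outer measure and that $X$ is analytic: by a capacitability argument for analytic sets one may pass to a compact $K\subs X$ with $\pack^s(K)>0$. The content of the theorem is then concentrated in the second stage, because a compact set of positive packing measure may well have infinite packing measure, and---unlike for most measures---one cannot simply shrink it. The obstruction is that $\pack^s$ is obtained from the pre-measure $\pack^s_0$ by the method-I regularization $\pack^s(E)=\inf\sum_n\pack^s_0(E_n)$, so mere positivity of $\pack^s$ carries almost no geometric information about any particular subset.

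The key structural observation is that $\pack^s(C)\leq\pack^s_0(C)$ always (take the trivial cover $\{C\}$), so it suffices to build a compact $C\subs K$ with $\pack^s_0(C)<\infty$ and $\pack^s(C)>0$. As a preparatory normalization I would first discard the bad portions of $K$: using separability, the union of all relatively open subsets of $K$ of $\pack^s$-measure zero is a countable union of such sets, hence null, and removing it yields a compact $E\subs K$ with $\pack^s(E)>0$ that is \emph{regular} in the sense that $\pack^s(E\cap U)>0$ for every open $U$ meeting $E$. This everywhere-positivity is exactly what will keep the subsequent construction from stalling at any scale.

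On such an $E$ I would construct $C$ as a Cantor scheme. At level $k$ I select a finite packing $\{B(x_{k,i},\rho_{k,i})\}$ by balls centered in $E$, refining the surviving level-$(k-1)$ balls, with radii $\rho_{k,i}\downto0$; the regularity of $E$ guarantees at every step that enough mass survives inside each ball to continue. Two competing quantities must be controlled along the tree. For the lower bound $\pack^s(C)>0$ I would distribute a mass $\mu$ over the scheme and verify the lower-density estimate $\liminf_{r\to0}\mu B(x,r)/r^s\lesssim1$ for every $x\in C$, so that the elementary mass-distribution principle for packing measure (the direction of the Frostman-type estimate that does not itself require this theorem) yields $\pack^s(C)>0$. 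For the upper bound I would arrange that at each scale the total $\sum_i\rho_{k,i}^s$ stays uniformly bounded; a counting argument then shows that any sufficiently fine packing of the compact limit set $C$ has controlled $s$-sum, giving $\pack^s_0(C)<\infty$.

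The main obstacle is precisely the simultaneous control of these two quantities. Keeping $\pack^s_0(C)$ finite forces the constructed set to have uniformly bounded packing behavior of exponent $s$ at \emph{all} scales---essentially an Ahlfors-regular-type geometry---yet the only hypothesis available is the soft, scale-free positivity of $\pack^s$ on $E$. Manufacturing such uniform multiscale geometry from this abstract input, by a stopping rule that at each node chooses radii small enough to cap the $s$-sum but large enough that the regularity of $E$ still furnishes descendants, is the delicate combinatorial heart of the argument and the step I expect to be hardest to make precise.
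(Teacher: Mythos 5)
The paper gives no proof of this statement: it is quoted verbatim from Joyce and Preiss \cite{MR1346667} and used as a black box, so the only fair comparison is between your sketch and their actual argument. Measured against that, your proposal is an outline that correctly identifies where the difficulty lies but does not resolve it, and it contains one step that is wrong as stated. The initial reduction ``by a capacitability argument for analytic sets one may pass to a compact $K\subs X$ with $\pack^s(K)>0$'' is not available: $\pack^s$ is not a Choquet capacity, since continuity from above on decreasing compacts fails when the values are infinite (a decreasing sequence of compacta each of infinite $\pack^s$-measure can intersect in a single point), and inner regularity of $\pack^s$ by compact sets on analytic sets is not a standing fact one can invoke --- if it were, together with semifiniteness it would already deliver most of the theorem. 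Joyce and Preiss do not make this reduction; they work directly with the analytic set (via a Suslin/continuous-image representation) and with the pre-measure $\pack^s_0$ throughout.

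The second and decisive gap is the one you yourself flag: the simultaneous control of $\pack^s_0(C)<\infty$ from above and $\pack^s(C)>0$ from below along the Cantor scheme is not a technical loose end of your argument --- it \emph{is} the theorem, and your sketch replaces it with the assertion that ``a stopping rule'' and ``a counting argument'' will work. Two concrete problems are left open there. First, bounding $\sum_i\rho_{k,i}^s$ uniformly in $k$ does not bound $\pack^s_0(C)$, because a $\del$-fine packing of $C$ may mix balls whose radii straddle many generations of the construction; converting generation-by-generation control into control of arbitrary packings is exactly where Joyce and Preiss expend real effort. Second, the hypothesis $\pack^s(E\cap U)>0$ for all open $U$ meeting $E$ is a statement about the regularized measure $\pack^s$, obtained by infimizing $\sum_n\pack^s_0(E_n)$ over countable covers, and it does not by itself furnish, at a given node and a given scale, a packing with $s$-sum bounded below; extracting usable multiscale geometric information from positivity of the Method-I regularization is the other half of the missing work. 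As it stands the proposal is a plausible strategy statement, not a proof, and I would not accept it as a substitute for the citation.
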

The other is a density inequality for packing measures, as it appears
in ~\cite{MR2288081}.
For a finite Borel measure $\mu$ on $X$ and $s>0$, the
\emph{lower density} of $\mu$ at $x\in X$ is
$$
  \Theta_*^s(\mu,x)=
  \liminf_{r\downto0}\dfrac{\mu B(x,r)}{r^s}.
$$
\begin{thm}[{\cite{MR2288081}}]\label{Edgar}
Let $\mu$ be a finite Borel measure in $X$, $E\subs X$ and $s>0$. Then
\begin{equation}\label{Edgar2}
  2^{-s}\pack^s(E)\cdot\inf_{x\in E}\Theta_*^s(\mu,x)\leq
  \mu(E)\leq\pack^s(E)
  \cdot\sup_{x\in E}\Theta_*^s(\mu,x).
\end{equation}
The right-hand side inequality holds provided
the rightmost product is not $0\cdot\infty$.
\end{thm}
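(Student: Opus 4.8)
The plan is to prove the two inequalities in \eqref{Edgar2} independently, each by a density--covering argument that exploits the weak-packing convention recorded in Remark~\ref{packing}. In both directions I would first reduce to uniform strata of $E$ on which the relevant $\liminf$ is witnessed at a single scale, and I would use repeatedly that a finite Borel measure on a separable metric space is outer regular, so that $\mu(E)=\inf\{\mu(U):U\supseteq E\text{ open}\}$. Write $a=\inf_{x\in E}\Theta_*^s(\mu,x)$ and $c=\sup_{x\in E}\Theta_*^s(\mu,x)$.

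For the left inequality I would fix $b<a$ and an open set $U\supseteq E$, and stratify $E$ by $E_k^U=\{x\in E:\mu B(x,r)>br^s\text{ for all }r\le1/k,\ \dist(x,X\setminus U)>1/k\}$. Given a $\del$-fine packing $\{B(x_i,r_i)\}$ of $E_k^U$ with $\del\le1/k$, Remark~\ref{packing} guarantees that the half-balls $B(x_i,r_i/2)$ are pairwise disjoint; each lies in $U$ and carries mass $\mu B(x_i,r_i/2)>b\,2^{-s}r_i^s$, so $b\,2^{-s}\sum_i r_i^s<\mu(U)$. Taking the supremum over such packings and then $\del\downto0$ gives $b\,2^{-s}\pack^s_0(E_k^U)\le\mu(U)$, hence $b\,2^{-s}\pack^s(E_k^U)\le\mu(U)$. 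Since $b<a$ and $U$ is open, $E_k^U\upto E$ as $k\to\infty$, so continuity from below of $\pack^s$ yields $b\,2^{-s}\pack^s(E)\le\mu(U)$; the crucial point is that the limits are taken in this order, with $U$ fixed. Finally I would let $U$ shrink and invoke outer regularity, and then let $b\upto a$, obtaining $2^{-s}a\,\pack^s(E)\le\mu(E)$. The factor $2^{-s}$ is exactly the loss incurred in passing from a weak packing to its disjoint half-balls.

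For the right inequality I may assume $c<\infty$ and $\pack^s(E)<\infty$, the other cases being trivial apart from the excluded product $0\cdot\infty$. Fix $\eps>0$; for each $x\in E$ the inequality $\Theta_*^s(\mu,x)\le c$ furnishes arbitrarily small radii $r$ with $\mu B(x,r)\le(c+\eps)r^s$, so these balls constitute a fine cover of $E$. For a countable cover $E=\bigcup_n A_n$ and each $\del>0$ I would extract from this fine cover a $\del$-fine \emph{weak} packing of $A_n$ whose balls cover $\mu$-almost all of $A_n$; then $\mu(A_n)\le\sum_i\mu B(x_i,r_i)\le(c+\eps)\sum_i r_i^s\le(c+\eps)\pack^s_\del(A_n)$, whence $\mu(A_n)\le(c+\eps)\pack^s_0(A_n)$ after $\del\downto0$. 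Summing over $n$, taking the infimum over covers in the definition of $\pack^s$, and letting $\eps\downto0$ then gives $\mu(E)\le c\,\pack^s(E)$.

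The main obstacle is the covering theorem required in the last argument: from a fine cover of $A_n$ by closed balls one must select a weak packing (in the sense of Remark~\ref{packing}, i.e.\ with mutually exterior centers) that still covers $\mu$-almost every point of $A_n$. In a general separable metric space Besicovitch-type covering fails, so one cannot thin to a disjoint subfamily; the weak-packing structure, in which balls may overlap provided no center lies in another ball, is precisely what makes a Vitali-type selection available for an arbitrary finite Borel measure, and establishing this selection is the analytic heart of the proof. The remaining steps are routine bookkeeping: reconciling the twice-regularized measure $\pack^s$ with the pre-measures $\pack^s_\del$ and $\pack^s_0$, and, in the lower bound, the careful ordering of limits together with outer regularity that converts the estimate for $\mu(U)$ into one for $\mu(E)$.
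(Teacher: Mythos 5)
The paper does not actually prove this statement: its ``proof'' consists of citing Theorems 5.9 and 5.29 of \cite{MR2288081} together with Remark~\ref{packing} to reconcile the packing conventions. So any genuine argument is already a different route, and your proof of the \emph{left-hand} inequality is correct and essentially complete: the stratification $E_k^U$, the disjointness of the half-balls $B(x_i,r_i/2)$ coming from the weak-packing condition, the inequality $\pack^s\leq\pack^s_0$, continuity from below of the Borel-regular outer measure $\pack^s$ along $E_k^U\upto E$ (with $U$ fixed), and outer regularity of $\mu$ are all deployed in a valid order, and the constant $2^{-s}$ comes out exactly as stated.

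The right-hand inequality, however, has a genuine gap, and you have located it yourself without closing it: everything rests on the claim that from a fine cover of $A_n$ by closed balls centered in $A_n$ one can extract a $\del$-fine weak packing whose balls cover $\mu$-almost all of $A_n$, for an \emph{arbitrary} finite Borel measure on an \emph{arbitrary} metric space. You assert that the weak-packing structure ``is precisely what makes a Vitali-type selection available,'' but no argument is given, and this is not a routine fact: the corresponding selection with pairwise disjoint balls can fail for general finite Borel measures, so the claim depends delicately on the weak-packing combinatorics and is, in effect, the content of the Vitali-property discussion surrounding Edgar's Theorem 5.29. As written, your second half is a reduction to an unproved (and not obviously true) covering theorem. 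There is a cleaner way out that uses only the paper's own Lemma~\ref{vitali} and no measure-theoretic covering theorem at all: fix $\alpha>1$, $\eps>0$, $\del>0$, assume $c<\infty$, and for each $x\in A_n$ choose $r_x$ with $\alpha r_x\leq\del$ and $\mu B(x,\alpha r_x)\leq(c+\eps)(\alpha r_x)^s$ (possible because $\Theta_*^s(\mu,x)\leq c$ provides arbitrarily small radii $\rho$ with $\mu B(x,\rho)\leq(c+\eps)\rho^s$; set $r_x=\rho/\alpha$). Lemma~\ref{vitali} yields a countable $\del$-fine weak packing $\{B(x,r_x):x\in D\}$ of $A_n$ whose \emph{enlarged} balls $B(x,\alpha r_x)$ cover $A_n$, whence
\begin{equation*}
  \mu(A_n)\leq\sum_{x\in D}\mu B(x,\alpha r_x)\leq(c+\eps)\alpha^s\sum_{x\in D}r_x^s
  \leq(c+\eps)\alpha^s\pack^s_\del(A_n),
\end{equation*}
and letting $\del\downto0$, then $\eps\downto0$ and $\alpha\downto1$, gives the sharp bound $\mu(A_n)\leq c\,\pack^s_0(A_n)$; summing over a cover and taking the infimum finishes the proof. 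The point you missed is that the loss from enlarging the balls can be made as close to $1$ as desired and absorbed by choosing the radii so that the density estimate applies to the enlarged ball, which removes the need for any almost-everywhere covering selection.
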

\begin{proof}
The first inequality follows from \cite[Theorem 5.9]{MR2288081} and
Remark~\ref{packing}.
The second inequality is \cite[Theorem 5.29]{MR2288081}.
\end{proof}

\begin{proof}[Proof of Lemma~\ref{FrostmanLemma}]
Let $C$ be the compact set of Theorem~\ref{JP}.
Define a Borel measure $\nu$ on $X$ by $\nu(E)=\pack^s(E\cap C)$.
By the left-hand side inequality~\eqref{Edgar2} we have
\begin{equation}\label{fr3}
  \inf_{x\in E}\Theta_*^s(\nu,x)\leq 2^s\frac{\nu(E)}{\pack^s(E)}\leq 2^s
\end{equation}
for every Borel set $E$ of positive measure $\nu$.
Let $A=\{x\in X:\Theta_*^s(\nu,x)>2^s\}$. If $\nu(A)>0$, then there is $\eps>0$
such that the set $E=\{x\in X:\Theta_*^s(\nu,x)>2^s+\eps\}$ also satisfies $\nu(E)>0$,
which contradicts \eqref{fr3}.
It follows that $\nu(A)=0$, i.e.,
$\Theta_*^s(\nu,x)\leq2^s$ $\nu$-a.e. Choose a closed set $F$ such that
$\nu(F)>0$ and $\Theta_*^s(\nu,x)\leq2^s$ for all $x\in F$.
Now define $\mu(E)=2^{-s-1}\nu(E\cap F)$. Clearly $\Theta_*^s(\mu,x)\leq\frac12$
for $x\in F$ and since $F$ is closed, $\Theta_*^s(\mu,x)=0$ for $x\notin F$.
The measure $\mu$ thus obviously satisfies~\eqref{Frostman}.

The other direction is trivial: if $\mu$ satisfies~\eqref{Frostman}, then clearly
$\Theta_*^s(\mu,x)\leq 1$ for all $x\in X$ and $\pack^s(X)\geq\mu(X)>0$ follows from
the right-hand side inequality~\eqref{Edgar2}.
\end{proof}

We will also need a simple covering lemma:
\begin{lem}\label{vitali}
Let $E\subs X$ and let $\{B(x,r_x):x\in E\}$ be a collection of
balls such that $\sup_{x\in E}r_x<\infty$.
For each $\alpha>1$ one can extract a countable packing
$\{B(x,r_x):x\in D\}$ such that
$\{B(x,\alpha r_x):x\in D\}$ covers $E$.
\end{lem}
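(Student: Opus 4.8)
The plan is to run a greedy, Vitali-type selection organized by the size of the radii, so that the enlargement constant can be pushed down to an arbitrary $\alpha>1$. What makes this possible is that our notion of packing only constrains the \emph{centres}: if $\{B(x_i,r_i)\}$ is a packing then $d(x_i,x_j)>\max(r_i,r_j)$ whenever $i\ne j$, and, conversely, if a point $x$ cannot be added to a packing $D$, then some $y\in D$ satisfies $d(x,y)\le\max(r_x,r_y)$. A \emph{maximal} packing will therefore cover $E$ by the enlarged balls, \emph{provided} we can ensure that whenever $y$ blocks $x$ the radius $r_y$ is not much smaller than $r_x$.

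First I would normalize. We may assume $r_x>0$ for every $x$, and set $R=\sup_{x\in E}r_x<\infty$ by hypothesis. Fix $\lambda$ with $1<\lambda\le\alpha$ --- it is simplest to take $\lambda=\alpha$ --- and partition $E$ into the geometric layers
$$
  E_k=\Bigl\{x\in E: R\lambda^{-k}<r_x\le R\lambda^{-(k-1)}\Bigr\},\qquad k=1,2,\dots,
$$
which exhaust $E$ because $R\lambda^{-k}\to0$. Then I construct $D$ one layer at a time: with $P_0=\emptyset$ and $P_{k-1}=D_1\cup\dots\cup D_{k-1}$ already chosen, let $D_k\subs E_k$ be a subset, maximal with respect to inclusion, such that $P_{k-1}\cup D_k$ is a packing; such a set exists by Zorn's Lemma, since the union of a chain of packings is again a packing. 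Put $D=\bigcup_k D_k$. Any two points of $D$ lie in a common $P_k$, so $D$ is a packing; hence by Remark~\ref{packing} the open balls $B(x,\frac12 r_x)$, $x\in D$, are pairwise disjoint, and as $X$ is separable $D$ is countable.

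It remains to verify the covering property, which is where the layering does the work. Take $x\in E$, say $x\in E_k$; if $x\in D_k$ there is nothing to prove, so assume not. By maximality of $D_k$ the set $P_k\cup\{x\}$ is not a packing, so there is $y\in P_k$ with $d(x,y)\le\max(r_x,r_y)$. This $y$ lies in some $E_j$ with $j\le k$, whence $r_y>R\lambda^{-j}\ge R\lambda^{-k}$ while $r_x\le R\lambda^{-(k-1)}$, which yields $r_x<\lambda r_y\le\alpha r_y$. If $\max(r_x,r_y)=r_y$ then $d(x,y)\le r_y\le\alpha r_y$, and if $\max(r_x,r_y)=r_x$ then $d(x,y)\le r_x<\alpha r_y$; either way $x\in B(y,\alpha r_y)$. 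Thus $\{B(y,\alpha r_y):y\in D\}$ covers $E$.

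The only delicate point --- and the main obstacle --- is exactly the case $r_x>r_y$, in which a \emph{smaller} previously chosen ball blocks $x$: an unrestricted maximal packing would force the enlargement factor to absorb the ratio $r_x/r_y$, which can be arbitrarily large. Sweeping the radii in decreasing bands of ratio $\lambda\le\alpha$ is precisely what bounds $r_x<\lambda r_y$ for every blocking $y$, and so brings the enlargement constant down to the prescribed $\alpha$.
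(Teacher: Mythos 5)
Your proof is correct and takes essentially the same route as the paper's: partition $E$ into geometric radius\nobreakdash-bands of ratio $\alpha$ and select, band by band, a maximal packing compatible with the earlier choices, so that any blocking ball has radius at least $r_x/\alpha$. The paper's version differs only cosmetically (it first discards from each band the points already covered by previously selected unenlarged balls before taking a maximal packing there), and your covering and countability checks are precisely the ``routine verification'' the paper omits.
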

\begin{proof}
Assume without loss of generality that $r_x<1$ for each $x\in E$.
For $n\in\Nset$ define inductively
\begin{align*}
  A_n &=\{x\in E:\alpha^{-n+1}>r_x\geq\alpha^{-n}\},
  \\
  B_n &=A_n\setminus\bigcup\{B(x,r_x):x\in\bigcup\nolimits_{i<n}\VV_i\},
\end{align*}
and let $\VV_n\subs\{B(x,r_x):x\in B_n\}$ be a packing
that is maximal among all packings extracted from
$\{B(x,r_x):x\in B_n\}$. Its existence verifies by a
standard Zorn Lemma argument.
Eventually put $\VV=\bigcup_{n\in\Nset}\VV_n$.
Routine verification proves that $\VV$ is the required packing.
%
%
\end{proof}

\section{Large ultrametric subset}\label{sec:MN}

In this section we prove a counterpart of Mendel and Naor~\cite[Theorem 1.4]{MR3032324}
mentioned in the introduction.

Recall that a metric space $(X,d)$ is \emph{ultrametric} if
the triangle inequality reads $d(x,z)\leq\max\{d(x,y),d(y,z)\}$.
Let us call a metric space $S$ \emph{Lipschitz-ultrametric} if
there is a Lipschitz bijection $f:S\to U$ onto an ultrametric space $U$.

Let $E$ be a set in a metric space. The packing dimension of $E$
is denoted and defined by
$$
 \pdim E=\inf\{s>0:\pack^s(E)=0\}=\sup\{s>0:\pack^s(E)=\infty\}.
$$
Properties of packing dimension, including various equivalent definitions,
are well-known. We refer to~\cite{MR1333890}
or~\cite{MR2118797}. We point out that packing dimension of any set is
greater than or equal to its Hausdorff dimension.
\begin{thm}\label{pack}
Let $X$ be an analytic metric space. For each $\del>0$ there is a
Lipschitz-ultrametric compact set $S\subs X$ such that
$\pdim S\geq\pdim X-\del$.
\end{thm}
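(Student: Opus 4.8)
The plan is to combine the packing Frostman lemma (Lemma~\ref{FrostmanLemma}) with the Mendel--Naor ultrametric skeleton theorem~\cite[Theorem~1.5]{MR3032324}, and then recover the packing dimension of the skeleton from the density inequality~\eqref{Edgar2}. If $\pdim X\leq\del$ I would simply take $S$ to be a singleton, which is compact and trivially Lipschitz-ultrametric with $\pdim S=0\geq\pdim X-\del$; so assume $\pdim X>\del$ and fix $s$ with $\pdim X-\del<s<\pdim X$. Then $\pack^s(X)>0$, so Lemma~\ref{FrostmanLemma} furnishes a finite Borel measure $\mu$ with $\mu(X)>0$ such that for every $x\in X$ there are radii $r_i\downto0$ with $\mu B(x,r_i)\leq r_i^s$. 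As the proof of that lemma shows, $\mu$ lives on a compact set $K\subs X$; I normalize it to a probability on $K$.

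Next I would apply~\cite[Theorem~1.5]{MR3032324} to the compact metric measure space $(K,d,\mu)$. What I need to extract is a \emph{compact} set $S\subs K$ carrying positive measure, $\mu(S)>0$, together with an ultrametric $\rho$ on $S$ that is bi-Lipschitz equivalent to the ambient metric, say
\begin{equation*}
  d(x,y)\leq\rho(x,y)\leq L\,d(x,y)\qquad(x,y\in S),
\end{equation*}
for some constant $L$. The upper bound makes the identity $(S,d)\to(S,\rho)$ a Lipschitz bijection onto an ultrametric space, so $S$ is Lipschitz-ultrametric; the lower bound will be used to transport the Frostman condition. Securing this bounded-distortion ultrametric skeleton of positive $\mu$-measure is the step that genuinely relies on Mendel--Naor, and is where I expect the main difficulty to sit. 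It is worth stressing that I need neither the quantitative measure-decay estimate contained in their theorem nor a small distortion: any fixed $L$ (for instance from a fixed choice of their accuracy parameter) suffices.

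Finally I would move the Frostman condition from $d$ to $\rho$. Let $\nu$ be the restriction of $\mu$ to $S$. Because $\rho\geq d$, every $\rho$-ball $\{y\in S:\rho(x,y)\leq r\}$ is contained in the $d$-ball $B(x,r)$, so for each $x\in S$, evaluating at its good radii $r_i$,
\begin{equation*}
  \nu\bigl(\{y\in S:\rho(x,y)\leq r_i\}\bigr)\leq\mu B(x,r_i)\leq r_i^s.
\end{equation*}
Thus the lower $s$-density of $\nu$ at $x$, computed in $(S,\rho)$, is at most $1$ for every $x\in S$. Applying the right-hand inequality of~\eqref{Edgar2} to the finite Borel measure $\nu$ on the ultrametric space $(S,\rho)$ with $E=S$ then yields $\pack^s(S)\geq\nu(S)=\mu(S)>0$ (packing measure computed in $\rho$), so the packing dimension of $(S,\rho)$ is at least $s$. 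Since the identity $(S,d)\to(S,\rho)$ is Lipschitz and Lipschitz maps do not increase packing dimension, $\pdim S\geq s>\pdim X-\del$, as required. The only loss is the additive $\del$ forced by the strict inequality $s<\pdim X$; in contrast to the Hausdorff setting there is no multiplicative degradation of the exponent, precisely because the packing Frostman condition is a \emph{liminf} condition and passing to the smaller $\rho$-balls can only decrease the measured mass.
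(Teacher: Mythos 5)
Your overall architecture (packing Frostman lemma, then Mendel--Naor, then transfer back to packing measure) matches the paper's, but the step you yourself flag as the crux is a genuine gap: Theorem~\ref{mendel} (Mendel--Naor's Theorem~1.5) does \emph{not} produce a compact bi-Lipschitz-ultrametric set $S$ with $\mu(S)>0$, and no such strengthening is possible. What it provides is only the covering inequality~\eqref{eq:mendel}, a bound on sums of $(\mu B(x_i,c_\eps r_i))^{1-\eps}$ over covers of $S$ (equivalently, an auxiliary probability measure supported on $S$ that is dominated by a power of the $\mu$-measure of dilated balls); the skeleton $S$ itself may well be $\mu$-null, and the fractional power $1-\eps$ together with the dilation $c_\eps$ is far too weak to force $\mu(S)>0$. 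Concretely, any subset of $[0,1]$ that is bi-Lipschitz equivalent to an ultrametric space with distortion $L$ splits, at every scale $s$, into pieces of diameter at most $s$ separated by gaps of length greater than $s/L$; hence its Lebesgue density at every point is at most $L/(L+1)<1$ and it is Lebesgue null by the density theorem. So the ``ultrametric skeleton of positive $\mu$-measure'' you ask for is not merely the hard part --- it is false in general, and the remainder of your argument (transferring the Frostman condition to $(S,\rho)$ and invoking the right-hand side of~\eqref{Edgar2}) has nothing to stand on.

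The paper's proof is structured precisely to avoid needing $\mu(S)>0$. It fixes exponents $t=\pdim X-\del<(1-\eps)p<p<\pdim X$, takes the Frostman measure for the exponent $p$, and proves $\pack^t(S)=\infty$ by contradiction: if $\pack^t_0(S_n)<\infty$ for some $S_n\upto S$, then the premeasures $\pack^{(1-\eps)p}_{\del_n}(S_n)$ can be made summably small; extracting via Lemma~\ref{vitali} a packing of Frostman-good balls whose doubled balls cover $S$ then produces a cover of $S$ violating~\eqref{eq:mendel}. Note also that this route does incur a multiplicative degradation of the exponent by the factor $1-\eps$ coming from the Mendel--Naor inequality (it is absorbed into the additive $\del$ by choosing $\eps$ small), contrary to your closing remark that no such loss occurs in the packing setting.
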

The proof is an easy application of~\cite[Theorem 1.5]{MR3032324}:
\begin{thm}[\cite{MR3032324}]\label{mendel}
Let $X$ be an analytic metric space and $\mu$ a finite Borel measure on $X$.
For each $\eps>0$ there is a constant $c_\eps$ and
a compact set $S\subs X$ such that
$S$ is Lipschitz-equivalent to an ultrametric space and
if $\{B(x_i,r_i):i\in I\}$ is a cover of $S$, then
\begin{equation}\label{eq:mendel}
  \sum_{i\in I}\bigl(\mu B(x_i,c_\eps r_i)\bigr)^{1-\eps}\geq(\mu X)^{1-\eps}.
\end{equation}
\end{thm}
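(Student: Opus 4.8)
The statement is the Mendel--Naor \emph{ultrametric skeleton} theorem, and the plan is to construct a skeleton of $(X,d,\mu)$ and then simply read the covering inequality off it. First I would make two harmless reductions: scaling $\mu$ so that $\mu X=1$ (the general case follows since dividing \eqref{eq:mendel} by $(\mu X)^{1-\eps}$ is exactly the normalized inequality), and using the fact that a finite Borel measure on an analytic space is tight to reduce the construction to a compact support. The object to build is a triple $(S,\rho,\nu)$ consisting of a compact set $S\subs X$, an ultrametric $\rho$ on $S$ that is Lipschitz-equivalent to $d|_S$ (so $d\leq\rho\leq Kd$ on $S$ for some $K=K(\eps)$, making $S$ Lipschitz-ultrametric via the identity map), together with a Borel measure $\nu$ on $S$ which is \emph{mass preserving}, $\nu(S)=\mu X=1$, and which satisfies the density bound
\[
  \nu\bigl(B_\rho(x,r)\bigr)\leq\bigl(\mu B(x,Kr)\bigr)^{1-\eps},
  \qquad x\in S,\ r>0,
\]
where $B_\rho$ is a ball in the ultrametric $\rho$. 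Note that the exponent $1-\eps<1$ is what permits $\nu$ to carry the full mass while being concentrated on the (typically small) set $S$, since $\mu$-ball values lie in $[0,1]$.

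Granting the skeleton, the covering inequality is a short ultrametric computation, and this is the part I can carry out cleanly. Let $\{B(x_i,r_i):i\in I\}$ cover $S$; discard those $i$ with $B(x_i,r_i)\cap S=\emptyset$ and for the rest pick $y_i\in B(x_i,r_i)\cap S$. Because $d\leq\rho\leq Kd$, the set $B(x_i,r_i)\cap S$ has $\rho$-diameter at most $2Kr_i$, and since $\rho$ is an ultrametric every point of such a set is the center of a $\rho$-ball of that radius containing it; hence $B(x_i,r_i)\cap S\subs B_\rho(y_i,2Kr_i)$. Combining the density bound with $B(y_i,2K^2r_i)\subs B(x_i,(2K^2+1)r_i)$ (valid since $d(x_i,y_i)\leq r_i$) gives
\begin{align*}
  \nu\bigl(B(x_i,r_i)\cap S\bigr)
  &\leq\nu\bigl(B_\rho(y_i,2Kr_i)\bigr)
   \leq\bigl(\mu B(y_i,2K^2r_i)\bigr)^{1-\eps}\\
  &\leq\bigl(\mu B(x_i,c_\eps r_i)\bigr)^{1-\eps},
\end{align*}
with $c_\eps=2K^2+1$. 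Summing over $i$ and using that $\nu$ is a probability measure supported on $S$ yields
\[
  1=\nu(S)\leq\sum_{i\in I}\nu\bigl(B(x_i,r_i)\cap S\bigr)
  \leq\sum_{i\in I}\bigl(\mu B(x_i,c_\eps r_i)\bigr)^{1-\eps},
\]
which is \eqref{eq:mendel} after undoing the normalization.

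The substantial content is therefore the construction of the skeleton, which I would build by a hierarchical \emph{padded} (Ramsey-type) random partition. Fix geometric scales $r_k=8^{-k}\diam X$ and produce a nested sequence of random partitions $\mathcal P_k$ of $X$, with each cluster of $\mathcal P_k$ of $d$-diameter at most $r_k$ and refining $\mathcal P_{k-1}$. The partitions must be padded: for each $x$, the probability that the whole ball $B(x,\beta r_k)$ lies inside the cluster of $x$ should be controlled from below by an appropriate power of $\mu B(x,r_k)$ relative to the parent cluster. The nested clusters form a tree whose set of ends, restricted to points padded at every scale, is $S$; setting $\rho(x,y)=\min\{r_k:x,y\text{ lie in a common cluster of }\mathcal P_k\}$ gives an ultrametric, and diameter control plus padding yield $d\leq\rho\leq Kd$. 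The measure $\nu$ is obtained by renormalizing the surviving mass at each scale so that total mass is preserved; telescoping the single-scale padding estimates across scales (a Fubini/first-moment argument) produces a good deterministic realization and the $(1-\eps)$-power density bound simultaneously.

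The hard part will be the single-scale padded-partition estimate with the \emph{sharp} exponent: the quantitative stochastic-separation inequality that converts padding probabilities into the $(1-\eps)$-power density bound, obtained by optimizing the separation parameter in a CKR-type random partition. This is exactly the technical core of Mendel and Naor, and it is the reason the theorem is invoked here as a citation rather than reproved. A secondary, more routine obstacle is the passage from finite measured approximations (where the Ramsey-partition machinery lives natively) to the compact analytic space, which I would handle by a weak-$*$ compactness and diagonalization argument to extract the limiting skeleton $(S,\rho,\nu)$ while preserving both the Lipschitz-equivalence constant $K$ and the density bound.
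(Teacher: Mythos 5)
You have set this up correctly as a reduction to the Mendel--Naor \emph{ultrametric skeleton} theorem, and that is a fair way to organize it: the paper itself gives \emph{no} proof of Theorem~\ref{mendel} --- it is imported verbatim from \cite{MR3032324} and used as a black box --- and your deduction of \eqref{eq:mendel} from a skeleton $(S,\rho,\nu)$ with $d\leq\rho\leq Kd$, $\nu(S)=\mu X$, and the $(1-\eps)$-power density bound is correct and mirrors how Mendel and Naor themselves later repackaged this result. The computation giving $c_\eps=2K^2+1$ is sound (one small remark: containing $B(x_i,r_i)\cap S$ in a $\rho$-ball of radius equal to its $\rho$-diameter around any of its points needs no ultrametricity; that step holds in any metric space). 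But judged as a proof of the statement, there is a genuine gap, and it is exactly the one you flag yourself: the existence of the skeleton --- the hierarchical padded random partitions and, above all, the single-scale estimate that converts padding probabilities into the sharp $(1-\eps)$ exponent --- is asserted and sketched, not proved, and it is not easier than the theorem being proved. In effect you derive one Mendel--Naor theorem from another of at least equal strength, which is legitimate as a structural reduction but leaves the substantive content untouched; this matches the paper's own stance of citing rather than reproving.

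Two secondary points in your outline also need repair. First, for an uncountable cover $\{B(x_i,r_i):i\in I\}$ the step $\nu(S)\leq\sum_{i\in I}\nu(B(x_i,r_i)\cap S)$ is not countable subadditivity. You should split off $I_0=\{i:\mu B(x_i,c_\eps r_i)>0\}$: if $I_0$ is uncountable the right-hand side of \eqref{eq:mendel} is infinite and there is nothing to prove; if $I_0$ is countable, the density bound makes each remaining piece $\nu$-null locally, and since $\rho$-balls in an ultrametric are clopen, a Lindel\"of argument shows the leftover set is $\nu$-null. Second, the reduction ``$\mu$ is tight, hence WLOG compact support'' does not literally deliver the stated inequality: inner regularity only produces compact $K$ with $\mu(K)\geq\mu(X)-\delta$, and running the construction inside $K$ yields $(\mu(X)-\delta)^{1-\eps}$ on the right of \eqref{eq:mendel}, a loss you cannot remove by letting $\delta\to0$ because the compact set $S$ depends on $\delta$; nor can it be absorbed by shrinking $\eps$. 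So if the skeleton machinery is native to compact spaces, the analytic case with the exact constant $(\mu X)^{1-\eps}$ requires a genuinely different passage than the one you describe. (For the paper's actual use of Theorem~\ref{mendel} in proving Theorem~\ref{pack}, a fixed multiplicative loss in the constant would be harmless --- the proof there only needs some positive constant in place of $(\mu X)^{1-\eps}$, with $\eta$ adjusted --- but relative to the statement as quoted it is a gap.)
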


\begin{proof}[Proof of Theorem~\ref{pack}]
Let $t=\pdim X-\del$. Choose $p$ and then $\eps>0$
such that $t<(1-\eps)p<p<\pdim X$.

Clearly $\pack^p(X)>0$. By Lemma~\ref{FrostmanLemma} there is a finite Borel measure
$\mu$ on $E$ such that $\mu\leq\pack^p$ and such that
\begin{equation}\label{frostman}
  \forall x\in X\ \exists r_n\searrow0 \quad \mu B(x,r_n)\leq r_n^p.
\end{equation}
Let $S\subs E$ and $c_\eps$ be the set and constant of the Mendel-Naor
Theorem~\ref{mendel}.
Put
\begin{equation}\label{eta}
  \eta=\left(\frac{\mu X}{(2c_\eps)^p}\right)^{1-\eps},
  \qquad
  s=(1-\eps)p.
\end{equation}
Therefore
\begin{equation}\label{eta2}
  \eta(2c_\eps)^{p(1-\eps)}=\eta(2c_\eps)^s<(\mu X)^{1-\eps}.
\end{equation}
We claim that $\pack^t(S)=\infty$. Aiming towards contradiction
suppose that it is not the case. Then there is a sequence of sets
$S_n\upto S$ such that $\pack^t_0(S_n)<\infty$ for all $n$;
this follows from the subadditivity of $\pack^t_0$.
Since $s>t$, we have $\pack^s_0(S_n)=0$. Therefore there is a sequence
$\del_n\downto0$ such that
\begin{equation}\label{eq1}
  \pack^s_{\del_n}(S_n)<\eta\cdot 2^{-n-1}.
\end{equation}
Let
$$
  \VV=\{B(x,r):x\in S,\ x\in S_n\setminus S_{n-1}\Rightarrow r<\del_n,\
               \mu B(x,c_\eps\cdot 2r)\leq(c_\eps\cdot2r)^p\}.
$$
By \eqref{frostman} and Lemma~\ref{vitali}, $\VV$ contains a
subfamily $\{B(x_i,r_i):i\in I\}\subs\VV$ such that
\begin{enum}
\item $\{B(x_i,r_i):i\in I\}$ is a packing,
\item $\{B(x_i,2r_i):i\in I\}$ is a cover of $S$.
\end{enum}
By (ii) and \eqref{eq:mendel} we have
\begin{equation}\label{eq2}
\sum_{i\in I}\bigl(\mu B(x_i,c_\eps 2r_i)\bigr)^{1-\eps}\geq(\mu X)^{1-\eps}.
\end{equation}
On the other hand, we have, for each $n$, by the definition of $\VV$, (i),
\eqref{eta2} and \eqref{eq1}
\begin{multline*}
  \sum_{x_i\in S_n\setminus S_{n-1}}\mu B(x_i,c_\eps 2r_i)^{1-\eps}
  \leq \sum_{x_i\in S_n\setminus S_{n-1}}((c_\eps 2r_i)^p)^{1-\eps}\\
  =(2c_\eps)^s \sum_{x_i\in S_n\setminus S_{n-1}}r_i^s
  \leq(2c_\eps)^s\pack^s_{\del_n}(S_n)\\
  <(2c_\eps)^s\eta\cdot 2^{-n-1}
  <2^{-n-1}(\mu X)^{1-\eps}.
\end{multline*}
Summing up over $n$ we get
$$
  \sum_{i\in I}\mu B(x_i,c_\eps 2r_i)^{1-\eps}
  <(\mu X)^{1-\eps}\sum_{n\in\Nset}2^{-n-1}=(\mu X)^{1-\eps},
$$
a contradiction with \eqref{eq2}. We proved that $\pack^t(S)=\infty$.
Therefore $\pdim S\geq t=\pdim X-\del$, as required.
\end{proof}

\section{Extending Little Lipschitz Function}\label{sec:maly}

In this section we prove that a function on a closed
set with finite lower scaled oscillation
has an extension with the same property over the whole space.

First we recall the notions in consideration.
For a mapping $f:X\to Y$ between metric spaces and $x\in X$, $r>0$ define
the \emph{oscillation} of $f$ on the ball $B(x,r)$
$$
  \omega_f(x,r)=\diam f(B(x,r)).
$$
The \emph{lower scaled oscillation} function is defined by
$$
  \lip\! f(x)=\liminf_{r\to0}\dfrac{\omega_f(x,r)}{r}.
$$
Note that some (e.g., \cite{MR2213746}) define the lower scaled oscillation
from the version of $\omega_f$
given by $\widehat\omega_f(x,r)=\sup_{y\in B(x,r)} d(f(y),f(x))$.
It is clear though that $\widehat\omega_f(x,r)\leq\omega_f(x,r)\leq2\widehat\omega(x,r)$
and thus the two lower scaled oscillation functions differ at most by a factor of $2$.
We will use $\widehat\omega$ in the proof of Theorem~\ref{maly}.

\begin{defn}
\begin{itemyze}
\item $f$ is \emph{little Lipschitz} if
$\lip\! f(x)<\infty$ for all $x\in X$,
\item $f$ is \emph{lower Lipschitz} if there is $L$ such that
$\lip\! f(x)\leq L$ for all $x\in X$.
\end{itemyze}
\end{defn}
Let us point out a trivial but very important fact:
\emph{Every little Lipschitz mapping is continuous.}

We now present the extension theorem.
\begin{thm}\label{maly}
Let $X$ be a metric space and $F\subs X$ a closed subset.
Then every little Lipschitz function $f:F\to\Rset$ has an extension
$f^*:X\to\Rset$ that is little Lipschitz on $F$ and locally Lipschitz
on $X\setminus F$.
In particular, $f^*$ is little Lipschitz on $X$.
\end{thm}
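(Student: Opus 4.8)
The plan is to use a partition-of-unity construction over a Whitney-type decomposition of the open set $X\setminus F$. First I would fix, for each $x\in X\setminus F$, a scale comparable to $\dist(x,F)$ and select from the balls $B(x,\tfrac14\dist(x,F))$ a countable packing whose dilates cover $X\setminus F$; Lemma~\ref{vitali} supplies exactly such a family, say $\{B(x_j,r_j):j\in J\}$ with $r_j\approx\dist(x_j,F)$ and $\{B(x_j,\alpha r_j)\}$ a cover. I would then build a Lipschitz partition of unity $\{\varphi_j\}$ subordinate to this cover, with each $\varphi_j$ supported in a ball of radius $\approx\dist(x_j,F)$ and satisfying the usual Whitney bound $\lip\varphi_j(x)\lesssim 1/\dist(x,F)$; the standard recipe $\varphi_j(x)=\psi_j(x)/\sum_k\psi_k(x)$ with $\psi_j$ a Lipschitz bump on $B(x_j,\alpha r_j)$ achieves this, the point being that only boundedly many terms are nonzero at any given point because the $B(x_j,r_j)$ form a packing.

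Next I would choose for each index $j$ a ``foot point'' $a_j\in F$ with $d(x_j,a_j)\le 2\dist(x_j,F)$ and define the extension by
\begin{equation*}
  f^*(x)=\begin{cases} f(x), & x\in F,\\[2pt]
  \sum_{j\in J}\varphi_j(x)\,f(a_j), & x\in X\setminus F.\end{cases}
\end{equation*}
On $X\setminus F$ this is a locally finite sum of Lipschitz functions, hence locally Lipschitz, which gives the second claimed property immediately and also shows $\lip f^*(x)<\infty$ at every interior point. The work is therefore concentrated at the boundary $F$, where I must verify that $\lip f^*(x)<\infty$ for each $x\in F$, using only that $\lip f(x)<\infty$. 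Here I would switch to the $\widehat\omega$ form of the oscillation, as the excerpt suggests, since the estimates are cleanest when measured from the base point $f(x)$.

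For a fixed $x\in F$ the key estimate is to bound $\widehat\omega_{f^*}(x,r)$ for a well-chosen sequence $r_i\downto0$ realizing $\lip f(x)$. Given $y\in B(x,r)$, if $y\in F$ the increment $|f^*(y)-f(x)|=|f(y)-f(x)|$ is controlled directly by the oscillation of $f$ on $F$. If $y\notin F$, then $f^*(y)$ is a convex combination of values $f(a_j)$ over those $j$ with $\varphi_j(y)\neq0$; each such $a_j$ lies within a bounded multiple of $\dist(y,F)\le d(x,y)\le r$ of $y$, hence within $O(r)$ of $x$, so $|f(a_j)-f(x)|$ is again dominated by the oscillation of $f$ near $x$ at scale $O(r)$. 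Combining the two cases yields $\widehat\omega_{f^*}(x,r_i)\lesssim \widehat\omega_f(x,Cr_i)$ for a fixed constant $C$, whence $\lip f^*(x)\le C\,\lip f(x)<\infty$. I expect the main obstacle to be precisely this boundary estimate: one must track the geometric constants carefully so that the foot points $a_j$ activated at scale $r$ all sit inside a ball $B(x,Cr)\cap F$ of proportional radius, ensuring that no increment of $f^*$ exceeds the genuine oscillation of the original datum $f$ at a comparable scale. The interior local-Lipschitz claim and the existence of the partition of unity are routine; the delicate point is uniform control of the relevant scales and the number of overlapping supports at the free boundary.
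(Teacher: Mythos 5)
Your overall architecture (a Whitney decomposition of $X\setminus F$ at scale $\dist(\cdot,F)$, foot points $a_j\in F$, the convex combination $\sum_j\varphi_j f(a_j)$, and the boundary estimate $\widehat\omega_{f^*}(x,r)\lesssim\widehat\omega_f(x,Cr)$) is a legitimate alternative to the paper's construction, and the scale bookkeeping you describe at points of $F$ is sound. The genuine gap is at the step you treat as routine: the claim that ``only boundedly many terms are nonzero at any given point because the $B(x_j,r_j)$ form a packing.'' Bounded overlap of Whitney balls is a doubling-space phenomenon and fails in a general (even countable, separable) metric space. The packing condition only gives $d(x_i,x_j)>\max\{r_i,r_j\}$, and for all indices $j$ activated at a point $y$ the radii $r_j$ are comparable to $\tfrac14\dist(y,F)$; nothing prevents infinitely many points, pairwise separated by roughly $\tfrac15\dist(y,F)$, from lying in $B\bigl(y,\tfrac12\dist(y,F)\bigr)$. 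Concretely, take $F=\{p\}$ and points $y,x_1,x_2,\dots$ with $d(x_i,x_j)=1.1$, $d(y,x_j)=0.9$ and $d(p,y)=d(p,x_j)=4$: then $r_j=\tfrac14\dist(x_j,F)=1$, the family $\{B(x_j,1)\}_j$ is a maximal packing of exactly the kind Lemma~\ref{vitali} produces, and $y$ lies in every dilated ball $B(x_j,\alpha)$ with $\alpha>1$. Without bounded overlap the denominator $\sum_k\psi_k$ need not be locally bounded or locally Lipschitz, the Whitney bound $\lip\varphi_j\lesssim1/\dist(\cdot,F)$ is lost, $f^*$ is no longer a locally finite sum (so local Lipschitzness on $X\setminus F$ does not follow), and for unbounded $f$ the series $\sum_j\varphi_j(y)f(a_j)$ may even diverge, since the activated $a_j$ range over a bounded but non-compact subset of $F$. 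The gap is repairable in principle --- every metric space admits a locally finite, locally Lipschitz partition of unity subordinate to any open cover, and refining the Whitney cover this way preserves your scale control at the boundary --- but that is a nontrivial ingredient you would have to import or prove; as written, your argument establishes the theorem only for doubling spaces $X$.

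For contrast, the paper avoids partitions of unity altogether with a McShane-type construction: after normalizing $0\le f\le1$ it sets $f^*(y)=\min\{\inf_{x\in F,\,r>0}f_{x,r}(y),1\}$, where $f_{x,r}$ takes the value $\sup f(B(x,3r)\cap F)$ on $B(x,r)$ and grows with slope $1/r$ outside. The three needed properties ($f^*=f$ on $F$, the estimate $\widehat\omega_{f^*}(x,r)\le\widehat\omega_f(x,3r)$ for $x\in F$, and $\tfrac2\eps$-Lipschitzness off the $\eps$-neighbourhood of $F$) then follow from short case analyses valid in an arbitrary metric space; unbounded $f$ is handled by an $\arctan$ conjugation with a damping factor $e^{-\dist(x,F)}$ --- a truncation issue your plan would also need to address.
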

\begin{proof}
First suppose $f$ is bounded.
We may clearly assume that $0\leq f(x)\leq1$ for all $x\in F$.
For any $x\in F$ and $r>0$ set
$$
  s_{x,r}=\sup f(B(x,3r)\cap F).
$$
Let $y\in X$. Define
$$
  f_{x,r}(y)=
  \begin{cases}
    s_{x,r}& \text{if $y\in B(x,r)$},\\
    s_{x,r}+\dfrac{d(x,y)-r}r& \text{if $y\in X\setminus B(x,r)$}.
  \end{cases}
$$
Each of the functions $f_{x,r}$ is clearly $\frac1r$-Lipschitz.
The extension is defined by
$$
  f^*(y)=\min\{\inf\{f_{x,r}(y): x\in F,\;r>0\},1\}.
$$
We prove that it has the required properties.
\begin{claim}
$f^*(y)=f(y)$ for all $y\in F$.
\end{claim}
\begin{proof}
Fix $y\in F$ and let $x\in F$, $r>0$.

If $d(y,x)\leq2r$, then $f_{x,r}(y)\geq s_{x,r}\geq f(y)$.
If $d(y,x)\geq2r$, then $f_{x,r}(y)\geq 1$. In any case,
$f^*(y)\geq f(y)$.

On the other hand, for all $r>0$ we have
$$
  f^*(y)\leq f_{y,r}(y)=s_{y,r}\leq f(y)+\womega_f(y,3r)
$$
and $f^*(y)\leq f(y)$ obtains by letting $r\to0$.
\end{proof}

\begin{claim}
$f^*$ is little Lipschitz on $F$.
\end{claim}
\begin{proof}
Let $x\in F$. Let $r>0$ and $y\in B(x,r)$. We will show that
\begin{equation}\label{maly222}
  \abs{f^*(y)-f(x)}\leq \womega_f(x,3r)
\end{equation}
which is enough. The following estimate is easy:
\begin{equation}\label{maly333}
  f^*(y)\leq f_{x,r}(y)=s_{x,r}\leq f(x)+\womega_f(x,3r).
\end{equation}
We also need a lower estimate of $f^*(y)$. To that end we are estimating
$f_{z,\rho}(y)$ for each $z\in F$ and $\rho>0$.
We consider three cases:
\begin{itemyze}
\item $d(x,z)\leq 3\rho$. In this case,
$f_{z,\rho}(y)\geq s_{z,\rho}\geq f(x)$.
\item $d(x,z)\leq 3r$. In this case,
$
f_{z,\rho}(y)\geq s_{z,\rho}\geq f(z)\geq f(x)-\womega_f(x,3r)
$
\item $d(x,z)\geq \max\{3\rho,3r\}$. In this case, $d(y,z)\geq2\rho$
and thus $f_{z,\rho}(y)\geq1$.
\end{itemyze}
In any case, $f_{z,\rho}(y)\geq f(x)-\womega(x,3r)$.
Therefore $f^*(y)\geq f(x)-\womega_f(x,3r)$, which, together with~\eqref{maly333},
yields \eqref{maly222}.
\end{proof}
For $\eps>0$ write $F^\eps=\{y\in X:\dist(y,F)\leq\eps\}$.
\begin{claim}
For each $\eps>0$, $f^*$ is $\frac2\eps$-Lipschitz on $X\setminus F^\eps$.
\end{claim}
\begin{proof}
If $x\in F^\eps$, $z\in F$ and $r\leq\frac\eps2$, then $f_{z,r}(x)\geq1$.
Therefore
$$
  f^*(x)=\min\{\inf\{f_{z,r}(x):x\in F, r>\tfrac12\eps\},1\},
$$
i.e., $f^*\restriction (X\setminus F^\eps)$ is an infimum of a family of
$\frac2\eps$-Lipschitz functions and is consequently also $\frac2\eps$-Lipschitz.
\end{proof}
Claims 1--3 obviously prove the theorem for the case of bounded $f$.
If $f$ is unbounded, we can use the standard trick: first replace $f$ with
$g=\arctan f$. Then we extend $g$ to $g^*$ with values in $[-\pi/2,\pi/2]$ and set
$$
  f^*(x)=\tan(g^*(x)e^{-\dist(x,F)}).
$$
The correction by $e^{-\dist (x,F)}$ has the effect that the resulting function
is finite-valued.
\end{proof}

The theorem is quite sharp:
the assumption that $F$ is closed cannot be abandoned, which sharply contrasts
the extension of Lipschitz maps. Also, a lower Lipschitz map does not have
to have a lower Lipschitz extension, not even in the case of compact $F$.
See Section~\ref{sec:rem} for details.

\section{Mapping Analytic Space Onto a Cube}\label{sec:main}

In this section we prove the main result of the paper: if $X$ is analytic
and $\pdim X>n$, then $X$ maps onto $[0,1]^n$ by a little Lipschitz function.

We need the notions of little and lower H\"older mapping.
For a mapping $f:X\to Y$ between metric spaces and $x\in X$ and $\beta>0$
define
$$
  \lip_\beta\! f(x)=\liminf_{r\to0}\dfrac{\omega_f(x,r)}{r^\beta}.
$$

\begin{defn}
\begin{itemyze}
\item $f$ is \emph{little $\beta$-H\"older} if
$\lip_\beta\! f(x)<\infty$ for all $x\in X$,
\item $f$ is \emph{lower $\beta$-H\"older} if there is $L$ such that
$\lip_\beta\! f(x)\leq L$ for all $x\in X$.
\end{itemyze}
\end{defn}
Note that, just like little Lipschitz ones, little H\"older maps are continuous.

We will utilize the notion of a monotone metric space introduced in~\cite{MR2957686}
and \cite{MR2979649}. Recall that by one of the equivalent definitions
(see~\cite{MR2979649}), a metric space $X$ is \emph{monotone}
if there is a linear order $<$ on $X$ and a constant $c$ such that
\begin{equation}\label{eq:mono}
  [x,y]\subs B(x,c\cdot d(x,y))
\end{equation}
whenever $x<y$. Here $[x,y]$ denotes the closed interval
with respect to the underlying order.

\begin{thm}\label{main}
Let $S$ be compact monotone metric space. If $\pack^s(S)>0$, then
there is a surjective lower $s$-H\"older function $g:S\to[0,1]$.
\end{thm}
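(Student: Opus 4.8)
The plan is to build the surjection $g:S\to[0,1]$ by first constructing a finite Borel measure $\mu$ on $S$ with the Frostman-type property from Lemma~\ref{FrostmanLemma}, namely $\mu(S)>0$ and for every $x$ there are radii $r_i\downto0$ with $\mu B(x,r_i)\leq r_i^s$, and then defining $g$ essentially as the (normalized) $\mu$-measure of an initial segment of the monotone order. Concretely, after rescaling so that $\mu(S)=1$, I would set
$$
  g(x)=\mu\{z\in S: z\leq x\},
$$
the cumulative distribution function of $\mu$ with respect to the monotone order $<$ on $S$. Surjectivity onto $[0,1]$ is where the order structure pays off: because $S$ is compact the order is complete enough that $g$ takes all values between its infimum and supremum, and one arranges $\mu$ to be atomless (or handles atoms by the intermediate-value behaviour of $g$ along the order) so that the range is exactly $[0,1]$.

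\emph{The lower H\"older estimate} is the crux and is where the monotonicity hypothesis~\eqref{eq:mono} is indispensable. Fix $x\in S$ and take the Frostman radii $r_i\downto0$ guaranteed by Lemma~\ref{FrostmanLemma}. The key point is that the increment $\omega_g(x,r_i)=\diam g(B(x,r_i))$ is controlled by the $\mu$-mass of $B(x,r_i)$: since $g$ is an initial-segment measure, two points $y,y'\in B(x,r_i)$ differ in $g$-value by at most the mass of the order-interval between them, and by monotonicity any such order-interval $[y,y']$ with $y,y'\in B(x,r_i)$ is contained in a ball $B(y,c\cdot d(y,y'))\subs B(x,c'\cdot r_i)$ for a suitable constant $c'$ depending only on $c$. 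Hence
$$
  \omega_g(x,r_i)\leq \mu B(x,c' r_i)\leq \mu B(x,C r_i)
$$
for an appropriate comparison of radii, which by the Frostman bound is at most $(C r_i)^s=C^s r_i^s$ along the chosen sequence. Dividing by $r_i^s$ and taking the liminf gives $\lip_s g(x)\leq C^s<\infty$ uniformly in $x$, i.e. $g$ is lower $s$-H\"older.

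\textbf{The main obstacle} I anticipate is the geometric step turning an order-interval $[y,y']$ inside $B(x,r_i)$ into a genuine metric ball of radius comparable to $r_i$, so that the Frostman mass bound applies. The definition~\eqref{eq:mono} controls $[u,v]$ by a ball centred at the smaller point with radius $c\,d(u,v)$, but here I need a uniform bound valid for all pairs drawn from $B(x,r_i)$ and expressed in terms of $r_i$ alone; reconciling the centre of the monotone-interval ball with the centre $x$ of the Frostman ball, and absorbing the resulting constants into a single $C$, is the delicate bookkeeping. A secondary subtlety is ensuring the range is all of $[0,1]$: I would normalize so $g(S)$ spans $[0,1]$ at the endpoints of the order, and use that $S$ is compact together with the (near-)continuity of $g$ to rule out gaps, possibly passing to a subinterval or reparametrizing if $\mu$ has atoms. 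Neither of these should obstruct the argument, but the radius-comparison constant in the lower H\"older estimate is the one place where care is genuinely required.
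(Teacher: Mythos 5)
Your construction is the same as the paper's: the Frostman measure from Lemma~\ref{FrostmanLemma}, the cumulative distribution function $g(x)=\mu\{z\in S:z\leq x\}$ of the monotone order, and monotonicity~\eqref{eq:mono} to trap order-intervals inside metric balls; the surjectivity discussion (the range is a nondegenerate interval, then rescale) also matches what the paper does by reference to \cite[Theorem 2.1]{MR3159074}.

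There is, however, one genuine flaw in the H\"older estimate as you wrote it, and it sits exactly at the spot you flagged as delicate. You bound $\omega_g(x,r_i)\leq\mu B(x,Cr_i)$ and then invoke ``the Frostman bound'' to conclude $\mu B(x,Cr_i)\leq(Cr_i)^s$. But Lemma~\ref{FrostmanLemma} only gives $\mu B(x,r)\leq r^s$ along the particular sequence $r=r_i$; it says nothing about the dilated radii $Cr_i$ with $C>1$, and for a general measure $\mu B(x,Cr_i)$ can be much larger than $r_i^s$. The comparison must go the other way: shrink the ball on which you measure the oscillation rather than inflate the ball to which you apply the Frostman bound. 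Concretely, for $y\in B(x,r_i/c)$ with $x<y$ one has $[x,y]\subs B(x,c\,d(x,y))\subs B(x,r_i)$, hence $g(y)-g(x)\leq\mu[x,y]\leq\mu B(x,r_i)\leq r_i^s$, and symmetrically for $y<x$; therefore $\omega_g(x,r_i/c)\leq 2r_i^s=2c^s(r_i/c)^s$, which yields $\lip_s\! g(x)\leq 2c^s$ along the sequence $r_i/c\downto0$. Comparing each $y$ to the centre $x$, as here, also disposes of your secondary worry about re-centring the interval $[y,y']$ for arbitrary pairs in the ball. With this one reversal your argument coincides with the paper's proof.
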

\begin{proof}
By Theorem~\ref{FrostmanLemma}
there is a finite Borel measure $\mu$ on $S$ such that
\begin{equation}\label{frostman2}
  \forall x\in S\ \exists r_{x,n}\downto0 \quad \mu B(x,r_n)\leq r_{x,n}^s.
\end{equation}
Let $<$ be the linear order and $c$ the constant satisfying for all $x<y$
$$
  [x,y]\subs B(x,c\cdot d(x,y)).
$$
Define $g(x)=\mu\{z\in S:z\leq x\}$.

Fix $x\in S$ and suppose that $d(x,y)\leq r_{x,n}/c$.
If $x<y$, then by \eqref{eq:mono} and \eqref{frostman2}
$$
  g(y)-g(x)\leq\mu[x,y]\leq\mu B(x,c\cdot r_{x,n}/c)=
  \mu B(x,r_{x,n})\leq r_{x,n}^s.
$$
If $y<x$, then by the same argument $g(x)-g(y)\leq r_{x,n}^s$.
Overall, $\omega_g(x,r_{x,n}/c)\leq 2r_{x,n}^s$ and thus
$\lip_s g(x)\leq 2c^s$, so $g$ is lower $s$-H\"older with constant
$2c^s$.

Routine calculation shows that $g$ is not constant and that $g(S)$ is an interval.
(The details are worked out in the proof of \cite[Theorem 2.1]{MR3159074}.)
Thus $g$ is the required mapping.
\end{proof}

\begin{thm}\label{main2}
Let $X$ be an analytic metric space. If $\pdim X>n$, then there is a
little Lipschitz surjective mapping $f:X\to[0,1]^n$.
\end{thm}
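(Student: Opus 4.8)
The plan is to reduce to the compact ultrametric setting, produce a H\"older surjection onto the interval by Theorem~\ref{main}, inflate it to the cube with a space\nobreakdash-filling curve, and finally extend by Theorem~\ref{maly}.

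First I would fix $s$ with $n<s<\pdim X$ and apply Theorem~\ref{pack} with $\del$ so small that $\pdim X-\del>s$; this yields a compact Lipschitz\nobreakdash-ultrametric set $S\subs X$ with $\pdim S>s$, hence $\pack^s(S)=\infty>0$. The next point is that $S$ is monotone: every ultrametric space is monotone (order the tree of balls lexicographically, so that for $x<z<y$ the point $z$ must lie in $B(x,d(x,y))$, whose ultrametric diameter is at most $d(x,y)$; thus $[x,y]\subs B(x,d(x,y))$ and \eqref{eq:mono} holds with $c=1$), and monotonicity is preserved under Lipschitz equivalence with only a harmless change of constant. Therefore Theorem~\ref{main} applies to $S$ and furnishes a surjective lower $s$\nobreakdash-H\"older map $g:S\to[0,1]$, i.e.\ $\lip_s g(x)\leq L$ for all $x$ and some constant $L$.

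Next I would fix an $n$\nobreakdash-dimensional Peano (Hilbert) curve $\phi:[0,1]\to[0,1]^n$, a surjection that is $\tfrac1n$\nobreakdash-H\"older: $\abs{\phi(a)-\phi(b)}\leq C\abs{a-b}^{1/n}$. Put $h=\phi\circ g:S\to[0,1]^n$; it is onto. The key computation is $\omega_h(x,r)\leq C\,\omega_g(x,r)^{1/n}$, so along a sequence $r_k\downto0$ realizing $\lip_s g(x)$ one gets $\omega_h(x,r_k)/r_k\leq C'\,r_k^{s/n-1}\to0$ because $s/n>1$. Hence $\lip h(x)=0$ for every $x\in S$, so $h$ is little Lipschitz as a map into $\Rset^n$ and, crucially, a single sequence $r_k$ works for all coordinates at once. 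Note that composing the \emph{extension} with $\phi$ would fail here, since Lipschitz composed with a $\tfrac1n$\nobreakdash-H\"older curve is only $\tfrac1n$\nobreakdash-H\"older; this is why one must form $h$ on $S$ first and extend afterwards.

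It remains to extend $h$ over $X$. Since $S$ is compact it is closed, so I would apply Theorem~\ref{maly} to each coordinate $h_i:S\to[0,1]$, obtaining $h_i^*:X\to[0,1]$ that is little Lipschitz on $S$ and locally Lipschitz off $S$, and set $f=(h_1^*,\dots,h_n^*):X\to[0,1]^n$. On $X\setminus S$ each $h_i^*$ is locally Lipschitz, so $f$ is locally Lipschitz and $\lip f<\infty$ there. The main obstacle is the behaviour of $f$ at points of $S$, where combining the coordinates is delicate: $\lip$ of a vector map is not bounded by the $\lip$ of its coordinates, since the liminf\nobreakdash-realizing radii may differ. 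I would overcome this by invoking the quantitative estimate \eqref{maly222} built into the proof of Theorem~\ref{maly}: for $x\in S$ and $y\in B(x,r)$ it gives $\abs{h_i^*(y)-h_i(x)}\leq\womega_{h_i}(x,3r)\leq\womega_h(x,3r)$, with the same right\nobreakdash-hand side for every $i$. Consequently $\omega_f(x,r)\leq 2\sqrt n\,\womega_h(x,3r)$, whence $\lip f(x)\leq 6\sqrt n\,\lip h(x)=0$. Thus $f$ is little Lipschitz on all of $X$, and since $f\restriction S=h$ is already onto $[0,1]^n$, the map $f$ is the desired little Lipschitz surjection.
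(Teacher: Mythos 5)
Your proposal is correct and follows essentially the same route as the paper: Theorem~\ref{pack} to obtain a compact Lipschitz-ultrametric $S$ of large packing dimension, monotonicity of compact ultrametric spaces, Theorem~\ref{main} for the lower H\"older surjection onto $[0,1]$, composition with a $\tfrac1n$-H\"older Peano curve \emph{on $S$ before extending}, and coordinatewise extension via Theorem~\ref{maly}. Your explicit appeal to the uniform estimate \eqref{maly222} to control all coordinates with a single radius is precisely the point the paper leaves to the reader (``it should be clear from the proof of Theorem~\ref{maly}\dots''), so your write-up is, if anything, more complete on that step.
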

\begin{proof}
By the assumption and Theorem~\ref{pack} there is a
Lipschitz-ultrametric compact set $S$ such that $\pdim S>n$. In particular,
$\pack^n(S)>0$.
As proved in~\cite{MR2979649} and also in~\cite[Lemma 2.3]{MR3159074}, every
compact ultrametric space is monotone, and since monotonicity is
a bi-Lipschitz invariant, the set $S$ is a monotone metric space.
We may thus apply the above theorem to get a function
$g:S\to[0,1]$ that is onto and lower $n$-H\"older.
It is well known (see, e.g., \cite[Theorem 4.55]{MR560132}) that
there is a Peano curve $\pi:[0,1]\to[0,1]^n$ onto $[0,1]^n$ that is $\frac1n$-H\"older.
The mapping $f=\pi\circ g$, being a composition of a
lower $n$-H\"older and $\frac1n$-H\"older mappings, is lower Lipschitz
and it is clearly onto $[0,1]^n$.

We now extend $f$ into a little Lipschitz function $f^*:X\to[0,1]^n$ as follows:
Break down the function $f$ into coordinate functions. It should be clear from the
proof of Theorem~\ref{maly} that we may extend each of the coordinate functions
in such a way that their aggregate is an extension of $f$ that
is still little Lipschitz.
\end{proof}

\section{Large metric spaces that do not map onto an interval}\label{sec:ZFC}

In this section we prove a theorem that exhibits that the conclusion of
Theorem~\ref{main2} indeed fails if we drop the assumption that $X$
is analytic: there are spaces of large packing
dimension that do not map onto $[0,1]$ by any little Lipschitz
(actually any continuous) function. As pointed out in the introduction,
this theorem complements \cite[Theorem 3.1]{MR3159074}.

\begin{thm}\label{add}
There exist separable metric spaces with arbitrarily large packing dimension
that cannot be mapped onto an interval by a continuous function.
\end{thm}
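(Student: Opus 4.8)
The plan is to construct a separable metric space of prescribed large packing dimension on which every real-valued continuous function is constant, so that in particular no continuous surjection onto an interval exists. The natural strategy is to take an ambient set of cardinality $\co$ carrying a fractal-like metric that forces high packing dimension, and to topologize it so finely that the space becomes totally disconnected in a strong sense---indeed \emph{connected-image-trivial}: the only connected subsets are singletons, but more is needed, since a continuous image of a disconnected space can still be an interval. What actually kills continuous surjections onto $[0,1]$ is making the space have no nonconstant continuous real functions at all, i.e.\ making it \emph{pseudocompact-like} or, more robustly, ensuring that any continuous image in $\Rset$ has empty interior for cardinality-cum-category reasons.

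The cleanest route I would take is to build $X$ as a subset of a large power of a fixed fractal factor, equipped with a metric under which balls are ``flat'' enough to guarantee $\pdim X\ge s$ for any prescribed $s$, while simultaneously arranging that $\abs X<\co$ forces every continuous $f:X\to\Rset$ to have image of cardinality less than $\co$; then a Baire-category or measure argument on $[0,1]$ shows such an image cannot be the whole interval. Concretely, I would fix a scale sequence and define a metric on a suitable set of sequences so that the packing measure $\pack^s$ is positive (verified directly from the definition in Section~\ref{sec:pack} by exhibiting $\del$-fine packings with large $\sum r_i^s$), giving $\pdim X\ge s$. The topological obstruction I would engineer is that $X$ is \emph{hereditarily disconnected} and, crucially, that its continuous images in $\Rset$ are meager: this can be forced by a transfinite construction picking points of $X$ to diagonalize against all potential continuous surjections, which number $\co$ under a suitable enumeration of candidate ``coding'' functions.

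The key steps, in order, are: (i) fix the target dimension $s$ and design the underlying metric template (a product or tree metric with carefully chosen branching and contraction ratios) whose packing dimension is at least $s$; (ii) verify $\pack^s(X)>0$ directly from the packing definition, which is the quantitative heart of the lower bound on $\pdim X$; (iii) set up an enumeration of length $\co$ of all ``continuous-function constraints'' one must defeat---here the point is that a continuous function is determined by its values on a countable dense set, so there are at most $\co$ of them to handle; and (iv) perform a transfinite recursion building $X$ point by point, at each stage adding a point that violates surjectivity of the function being handled at that stage, while never destroying the accumulated packing-dimension lower bound. The final verification collects that $X$ is separable metric, that $\pdim X\ge s$, and that no continuous $f:X\to[0,1]$ is onto.

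The hard part will be step (iv): simultaneously maintaining the \emph{lower} packing-dimension bound and defeating all continuous surjections. Unlike Hausdorff dimension, packing dimension is governed by $\pack^s_0$ and hence by the behavior of packings along every subsequence of scales, so naive diagonalization that removes or perturbs points can collapse $\pack^s$; the delicate balance is to keep enough ``mass'' at every scale $\del$ to preserve $\pack^s(X)>0$ while the adversarial construction prunes the space. I expect to resolve this by organizing the recursion so that the set of points controlling the packing estimate is fixed in advance (a ``skeleton'' witnessing $\pack^s$) and the diagonalizing points are chosen from a reservoir disjoint from, or harmlessly interleaved with, that skeleton, so that the lower bound survives verbatim. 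Reconciling the cardinality bookkeeping of the diagonalization with the geometric packing estimate is the main obstacle, and getting a metric template for which both can coexist is where the real work lies.
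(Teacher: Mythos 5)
There is a genuine gap here, and it is not just that the two steps you yourself flag as ``the real work'' are left unresolved: each of the two mechanisms you propose for defeating continuous surjections fails outright in ZFC, and the proposal never confronts this. The cardinality mechanism (arrange $\abs X<\co$, so every continuous image has size $<\co$ and cannot contain an interval) requires an uncountable set of cardinality less than $\co$ with large packing dimension. Under CH every set of size $<\co$ is countable, and countable sets have $\pack^s=0$ for every $s>0$ (singletons are $\pack^s$-null and $\pack^s$ is countably subadditive), hence packing dimension $0$. So no metric template, however cleverly designed, can make this route work without an extra set-theoretic hypothesis. The diagonalization mechanism has the same problem in a different place: to diagonalize against continuous functions on a space that is still being built, you must enumerate candidates in advance (the paper does this by Kuratowski's theorem, extending each continuous function on $X$ to a $G_\delta$ superset of positive measure), and then at stage $\alpha$ of the recursion you must pick a point of a positive-measure Borel set avoiding the fewer-than-$\co$ null fibers $f_\beta^{-1}(y_\beta)$ accumulated so far. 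That selection is possible exactly when fewer than $\co$ null sets cannot cover a positive-measure set, i.e.\ it needs $\cov\NN=\co$, which is independent of ZFC.

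The paper's resolution is precisely to accept that neither construction works unconditionally and to observe that their hypotheses are jointly exhaustive: since $\add\NN\leq\cov\NN\leq\co$, either $\cov\NN=\co$ (run the transfinite diagonalization inside $\Rset^n$, where the packing bound is free because a non-null set $A\subs\Rset^n$ automatically has $\pack^n(A)>0$), or $\add\NN<\co$ (take the Bartoszy\'nski set $E\subs\Pset$ of size $\add\NN$ escaping every slalom, prove $\pdim E=\infty$ via Lemma~\ref{lem333}, and use the cardinality argument). Note that in neither branch does the tension you worry about---preserving a packing skeleton while pruning points---ever arise: in the first branch the dimension bound is inherited from Lebesgue measure, and in the second the dimension bound is a consequence of the combinatorial definition of $E$, not of a hand-built fractal structure. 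Without the case split on $\cov\NN$ and $\add\NN$ (or some equivalent device), your plan cannot be completed.
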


We first prove the theorem, just like in \cite{MR3159074}, under a
set-theoretic assumption about the \si ideal $\NN$ consisting of
the subsets of $\Rset$ of Lebesgue measure zero. Recall that
given an ideal $\JJ$ of sets in $\Rset$, the covering number $\cov\JJ$ is
defined as the minimal size of a family $\FF\subs\JJ$ that covers $\Rset$.
In the following theorem we consider $\cov\NN$.
Denote by $\co$ the cardinal of continuum. It is well-known that
$\omega<\cov\NN\leq\co$ and that the value of
$\cov\NN$ cannot be determined from the usual axioms of set theory.
We will use $\cov\NN=\co$, which  means that less than continuum many
null sets does not cover the line.
Note that $\cov\NN=\co$ is yielded, e.g., by the Continuum Hypothesis.
The interested reader is referred to~\cite{MR1350295}.
\begin{thm}\label{add1}
If $\cov\NN=\co$, then for any $n$ there is a set $A\subs\Rset^n$ that
is not Lebesgue null and yet
does not map onto an interval by a continuous function.
\end{thm}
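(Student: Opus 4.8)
The plan is to construct $A$ by transfinite recursion of length $\co$, adjoining one point at each stage. First I would reformulate the target property: a set $A$ fails to map onto an interval by a continuous function exactly when every continuous $f\colon A\to\Rset$ has image $f(A)$ with empty interior (if $f(A)$ contained a nondegenerate interval $J$ one could clamp $f$ to a continuous surjection onto $J$, and conversely). To obtain a pool of functions that does not depend on the still-unbuilt $A$, I would invoke the classical extension theorem of Kuratowski: every continuous $f\colon A\to\Rset$ extends to a continuous $g$ on a $G_\delta$ set $G\supseteq A$, and then $f(A)=g(A)$. Since there are only $\co$ many $G_\delta$ subsets of $\Rset^n$ and only $\co$ many continuous real functions on each, these extensions form a fixed family $\{g_\alpha\colon\alpha<\co\}$, $g_\alpha\colon G_\alpha\to\Rset$. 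It therefore suffices to build $A$ so that (a) $A$ is not contained in any Lebesgue null set, and (b) $g_\alpha(A)$ has empty interior for every $\alpha$.

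For (b) I would, for each $\alpha$ and each nonempty rational interval $I$, pin down in advance a value $t_{\alpha,I}\in I$ that $g_\alpha(A)$ will be forced to omit; since a subset of $\Rset$ that omits a point of every rational interval contains no interval, this yields (b). The observation that keeps these omitted values harmless for non-nullness is that, on a $\sigma$-finite space, the level sets of a continuous (hence measurable) function are null for all but countably many values. So $t_{\alpha,I}$ can be chosen so that the level set $L_{\alpha,I}:=g_\alpha^{-1}(t_{\alpha,I})$ is null, and simultaneously so that $t_{\alpha,I}$ is not already realized by any of the fewer than $\co$ points chosen at earlier stages. The recursion then runs as follows: enumerate the functions $g_\alpha$ and the null $G_\delta$ sets $N_\alpha$, $\alpha<\co$; at stage $\alpha$ first select the values $t_{\alpha,I}$ as above, and then choose the new point $a_\alpha$ lying outside $N_\alpha$ and outside every level set $L_{\gamma,I}$ with $\gamma\leq\alpha$.

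The crux, and the only place the hypothesis is used, is the existence of $a_\alpha$: the forbidden region $N_\alpha\cup\bigcup_{\gamma\leq\alpha,\,I}L_{\gamma,I}$ is a union of fewer than $\co$ null sets, and $\cov\NN=\co$ says precisely that such a union cannot cover $\Rset^n$, so a legal $a_\alpha$ exists. I would flag here the pitfall that makes this the delicate step: one is tempted to force non-nullness by meeting every closed set of positive measure, but under $\cov\NN=\co$ a positive-measure set may still be covered by fewer than $\co$ null sets (whenever $\mathrm{non}\NN<\co$, as in the random real model), so that approach stalls; escaping each null $G_\delta$ set $N_\alpha$ is the robust substitute, and it gives (a) because every null set lies inside some $N_\alpha$. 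It remains to verify (b): given continuous $f$ on $A$, extend it to some $g_\alpha$ with $A\subseteq G_\alpha$; for each rational $I$ no point $a_\beta\in A$ maps to $t_{\alpha,I}$, since for $\beta\geq\alpha$ the point $a_\beta$ was chosen outside $L_{\alpha,I}$, while for $\beta<\alpha$ the value $t_{\alpha,I}$ was chosen distinct from $g_\alpha(a_\beta)$. Hence $g_\alpha(A)=f(A)$ omits $t_{\alpha,I}$ for every $I$ and so has empty interior, which completes the proof.
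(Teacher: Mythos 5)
Your proof is correct and follows essentially the same route as the paper's: Kuratowski extension of continuous functions on $A$ to $G_\delta$ sets, an enumeration of the resulting $\co$ many pairs, and a transfinite recursion of length $\co$ in which each new point escapes a prescribed null set together with fewer than $\co$ null level sets, the omitted values being chosen with null fibers exactly as in the paper (at most countably many fibers of a continuous function can have positive measure), with $\cov\NN=\co$ guaranteeing the point exists. The only (cosmetic) difference is that you omit one value from every rational interval to force $f(A)$ to have empty interior, whereas the paper omits a single value $y_\alpha\in[0,1]$ after normalizing so that $[0,1]\subs f(A)$.
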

From now on, $\leb^n$ denotes the Lebesgue measure in $\Rset$.
We will need a well-known fact, see, e.g., \cite[522Va]{fremlin5}
for the proof.
\begin{lem}\label{ox}
Let $B\subs\Rset^n$ be a Borel set and $\FF$ a family of Lebesgue null sets
in $\Rset^n$.
If $\leb^n(B)>0$ and $\abs\FF<\cov\NN$, then $\FF$ does not cover $B$.
\end{lem}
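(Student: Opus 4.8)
The plan is to prove the contrapositive: if every member of $\FF$ is $\leb^n$-null and $\FF$ covers $B$ while $\leb^n(B)>0$, then $\abs\FF\geq\cov\NN$. Since a countable union of null sets is null and cannot cover the positive-measure set $B$, the family $\FF$ must be uncountable, so $\abs\FF\cdot\aleph_0=\abs\FF$; this lets me adjoin countably many extra null sets without changing the cardinality. The idea is to manufacture from a cover of $B$ a cover of all of $\Rset^n$ by the same number of null sets, and then appeal to the definition of $\cov\NN$. I will also use the standard fact that $\cov\NN$ does not depend on the dimension: a Borel isomorphism between $(\Rset^n,\leb^n)$ and $(\Rset,\leb^1)$ carrying null sets to null sets in both directions (built by tiling and the isomorphism theorem for atomless standard measure spaces) puts covers of $\Rset^n$ by null sets in bijection with covers of $\Rset$ by null sets. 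Thus I may read $\cov\NN$ as the covering number of the null ideal on $\Rset^n$.

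The key geometric ingredient is a translation fact: \emph{if $\leb^n(B)>0$, then there is a countable set $T\subs\Rset^n$ such that $\Rset^n\setminus\bigcup_{t\in T}(B+t)$ is $\leb^n$-null.} Granting this, the family $\{F+t:F\in\FF,\ t\in T\}$ consists of null sets (translates of null sets are null), has cardinality at most $\abs\FF\cdot\aleph_0=\abs\FF$, and covers $\bigcup_{t\in T}(B+t)$, because $\FF$ covers $B$. Adjoining the single null set $N=\Rset^n\setminus\bigcup_{t\in T}(B+t)$ produces a family of at most $\abs\FF$ null sets covering all of $\Rset^n$, whence $\cov\NN\leq\abs\FF$, as desired.

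It remains to establish the translation fact, which is where the real work lies. First, by a Fubini computation together with translation invariance, for every set $V$ of finite positive measure one has
\[
  \int_{\Rset^n}\leb^n\bigl((B+t)\cap V\bigr)\,dt=\leb^n(B)\,\leb^n(V),
\]
so whenever $\leb^n(V)>0$ there is a translate $B+t$ meeting $V$ in positive measure. I then run a measure-exhaustion argument inside a fixed cube $Q$: over all countable families $\{t_k\}$ let $s=\sup\leb^n\bigl(Q\cap\bigcup_k(B+t_k)\bigr)$, which is bounded by $\leb^n(Q)$; choosing countable families whose covered measure tends to $s$ and taking their union gives a single countable family attaining $s$, with $W=Q\cap\bigcup_k(B+t_k)$. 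If $\leb^n(Q\setminus W)>0$, the displayed identity applied to $V=Q\setminus W$ yields a further translate meeting $Q\setminus W$ in positive measure, contradicting the maximality of $s$; hence $\leb^n(Q\setminus W)=0$. Writing $\Rset^n$ as a countable union of cubes and amalgamating the countably many translation sets produces the required countable $T$.

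The main obstacle is precisely this translation–exhaustion step: the Fubini averaging guarantees that one can always make positive-measure progress, but converting that into a genuinely co-null cover requires the exhaustion argument (selecting a countable family attaining the supremal covered measure in each cube). By contrast, the dimension-independence of $\cov\NN$ and the cardinal arithmetic are routine once this fact is in hand.
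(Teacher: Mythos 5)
Your proof is correct. Note that the paper does not actually prove this lemma: it is quoted as a known fact with a reference to Fremlin's \emph{Measure Theory}, Vol.~5, 522Va, where it follows from the homogeneity of the null ideal (the trace of $\NN$ on any non-negligible set has the same covering number as $\NN$ itself). Your argument is a self-contained derivation of exactly that homogeneity in the case at hand: the Fubini identity $\int\leb^n((B+t)\cap V)\,dt=\leb^n(B)\,\leb^n(V)$ plus the exhaustion over countable translation families correctly yields the standard fact that countably many translates of a positive-measure set cover $\Rset^n$ up to a null set, and the remaining steps (translates of null sets are null, $\abs\FF\cdot\aleph_0=\abs\FF$ since a countable subfamily cannot cover $B$, adjoining the null remainder, and the dimension-independence of $\cov\NN$ via a null-preserving Borel isomorphism) are all sound. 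The only thing worth flagging is that the Fubini identity as displayed needs $\leb^n(B)<\infty$ to be literally an equality of finite quantities; one should first replace $B$ by a Borel subset of finite positive measure, which is harmless since covering that subset is enough. What your route buys is elementarity and self-containment; what the cited reference buys is generality (it works for arbitrary homogeneous ideals and arbitrary non-negligible, not necessarily Borel, sets).
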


\begin{proof}[Proof of Theorem~\ref{add1}]
We elaborate the proof of~\cite[Theorem 3.1]{MR3159074}.
Let $\{N_\alpha:\alpha<\co\}$ enumerate all Lebesgue null Borel sets in $\Rset^n$.
Let $\{(G_\alpha,f_\alpha):\alpha<\co\}$ list all pairs $(G,f)$ such that
$G\subs\Rset^n$ is a $G_\del$-set that is not Lebesgue null and $f:G\to\Rset$ is
a continuous function.

For each $\alpha<\co$ construct recursively points $x_\alpha\in\Rset^n$
and $y_\alpha\in[0,1]$ subject to the following conditions:
\begin{enum}
  \item $x_\alpha\in G_\alpha\setminus N_\alpha$,
  \item $x_\alpha\notin\bigcup_{\beta<\alpha}f_\beta^{-1}(y_\beta)$,
  \item $y_\alpha\notin f_\alpha(\{x_\beta:\beta\leq\alpha\})$,
  \item $f_\alpha^{-1}(y_\alpha)$ is Lebesgue null.
\end{enum}
Suppose that at stage $\alpha$ the conditions are met by smaller indices.
Since $\cov\EE=\co$, we may, with the aid of (iv),
apply Lemma~\ref{ox} with $B=G_\alpha\setminus N_\alpha$ and
$\FF=\{f_\beta^{-1}(y_\beta):\beta<\alpha\}$ to conclude that
there is $x_\alpha\in G_\alpha$ so that (i) and (ii) hold.

Since $\abs{f_\alpha(\{x_\beta:\beta\leq\alpha\})}<\co$, there is
$t\notin f_\alpha(\{x_\beta:\beta\leq\alpha\})$
such that $f_\alpha^{-1}(t)$ is Lebesgue null, because otherwise
the family $\{f_\alpha^{-1}(s):s\notin f_\alpha(\{x_\beta:\beta\leq\alpha\})$
would be an uncountable family of pairwise disjoint $G_\del$-sets of positive measure.
Thus letting $y_\alpha=t$, (ii) and (iv) hold.

Let $A=\{x_\alpha:\alpha<\co\}$.
Clearly $A$ is not null by (i). Suppose that there is a continuous function
$f:A\to\Rset$ such that $f(A)$ contains an interval. Mutatis mutandis we may
suppose that $[0,1]\subs f(A)$.
We now employ an extension theorem due to Kuratowski
(see~\cite[(3.8)]{MR1321597} or~\cite[4.3.20]{MR1039321}): there is a continuous extension $f^*$ of $f$
to a $G_\del$-set $G\supseteq A$. Since $A$ is not null, neither is $G$, thus
the pair $(G,f^*)$ is listed, i.e., $(G,f^*)=(G_\alpha,f_\alpha)$ for some $\alpha$.
Now $f_\alpha(x_\beta)\neq y_\alpha$ for $\beta\leq\alpha$ by (iii)
and for $\beta>\alpha$ by (ii). It follows that $y_\alpha\notin f_\alpha(A)$.
Therefore $[0,1]\nsubseteq f_\alpha(A)=f(A)$.
\end{proof}

We now show a construction of a set with infinite packing dimension that
does not map onto an interval by any function. This construction requires
an assumption that is obviously weaker than $\cov\NN<\co$ and
it is also weaker than a condition that was used in
\cite[Theorem 3.8]{MR3159074} for a similar construction.
The additivity of $\add\NN$ is defined as the minimal cardinality
of a family $\FF\subs\NN$ such that $\bigcup\FF\notin\NN$.
Similar notes as about $\cov\NN$ are in place:
$\omega<\add\NN\leq\cov\NN\leq\co$; the value of
$\add\NN$ cannot be determined from the usual axioms of set theory;
$\add\NN=\co$ is yielded, e.g., by the Continuum Hypothesis.
We also note that this cardinal invariant is the smallest one in the so
called \emph{Cicho\'n's Diagram}.
In particular, $\add\NN<\co$ is relatively consistent.
We refer to~\cite{MR1350295} for details.

Recall that $\Pset$ is the set of sequences of
natural numbers together with the product topology. The
topology is metrizable. One of the common metrics
that induce the topology of $\Pset$ is the following \emph{least difference metric}:
for distinct $x,y\in\Pset$ denote by $\abs{x\wedge y}$ the length of the
common initial segment of $x$ and $y$.
Thus $\abs{x\wedge y}=\min\{n\in\Nset:x(n)\neq y(n)\}$.
Define $d(x,y)=2^{-\abs{x\wedge y}}$.
From now on we suppose that $\Pset$ is equipped with this metric.
The family of finite subsets of $\Nset$ is denoted by $[\Nset]^{<\Nset}$. The
quantifiers $\forinfty n$ and $\exinfty n$ read ``for all $n$ except finitely many''
and ``for infinitely many $n$'', respectively.
\begin{thm}\label{add2}
There is a set $E\subs\Pset$ such that $\abs{E}=\add\NN$ and $\pdim E=\infty$.

Consequently, if $\add\NN<\co$ then $E$ does not map onto an interval by any function.
\end{thm}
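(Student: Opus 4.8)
The plan is to realize $E$ as a set of branches in $\Pset$ whose only obstruction to infinite packing dimension would be a ``thin'' countable cover, and to arrange that such a cover cannot exist because it would manufacture a slalom localizing a family witnessing $\add\NN$. Fix a super\nobreakdash-exponentially growing sequence $b_j$, say $b_j=2^{2^j}$, and work inside the product $\prod_j b_j\subs\Pset$, so that each point is a function $f$ with $f(j)<b_j$. In the least difference metric a ball of radius $2^{-k}$ is a cylinder determined by the first $k$ coordinates, so for $A\subs\Pset$ the covering number at scale $2^{-k}$ is exactly $N_k(A)$, the number of distinct length\nobreakdash-$k$ initial segments of points of $A$, and the upper box dimension is $\overline{\dim}_{\mathsf B}A=\limsup_k k^{-1}\log_2 N_k(A)$.

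For the combinatorial input I would invoke the Bartoszy\'nski characterization of $\add\NN$ in the product $\prod_j b_j$ (see~\cite{MR1350295}): for every width $h$ with $h(j)\to\infty$ and $h(j)<b_j$ eventually, the least size of a family $F\subs\prod_j b_j$ that is \emph{not localized} by any slalom $\varphi$ (meaning $\varphi(j)\subs b_j$, $\abs{\varphi(j)}\leq h(j)$, and $\forinfty j\ f(j)\in\varphi(j)$) equals $\add\NN$; in particular a witnessing family of size exactly $\add\NN$ exists. Applying this to $w_s(j)=(j+1)2^{s(j+1)}$ (which diverges and stays below $b_j$ for each fixed $s$) gives, for every $s\in\Nset$, a family $F_s$ of size $\add\NN$ that is not $w_s$\nobreakdash-localizable. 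I then set $F=\bigcup_{s\in\Nset}F_s$ and $E=F$. Since $\add\NN$ is uncountable, $\abs E=\add\NN$, and because a superset of a non\nobreakdash-localizable family is non\nobreakdash-localizable, $E$ itself fails to be $w_s$\nobreakdash-localizable for every $s$.

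The core step is to show $\pdim E=\infty$ by proving $\pdim E\geq s$ for each $s\in\Nset$. Using that packing dimension coincides with the modified upper box dimension~\cite{MR1333890}, if $\pdim E<s$ then $E=\bigcup_n A_n$ with $\overline{\dim}_{\mathsf B}A_n<s$ for all $n$; hence for each $n$ there is $m_n$ with $N_k(A_n)<2^{sk}$ for $k\geq m_n$, and in particular the coordinate marginal satisfies $\abs{\{f(j):f\in A_n\}}\leq N_{j+1}(A_n)<2^{s(j+1)}$ once $j\geq m_n$. Defining the slalom $\varphi(j)=\bigcup\{\{f(j):f\in A_n\}:n\leq j,\ m_n\leq j\}$ yields $\abs{\varphi(j)}<(j+1)2^{s(j+1)}=w_s(j)$, and for any $f\in E$, choosing $n_0$ with $f\in A_{n_0}$, one gets $f(j)\in\varphi(j)$ for all $j\geq\max\{n_0,m_{n_0}\}$. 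Thus $\varphi$ would $w_s$\nobreakdash-localize all of $E$, contradicting the previous paragraph; so $\pdim E\geq s$, and as $s$ is arbitrary $\pdim E=\infty$. The ``Consequently'' clause is then immediate: if $\add\NN<\co$ then $\abs E<\co$, so for any $g:E\to\Rset$ we have $\abs{g(E)}\leq\abs E<\co$, while every nondegenerate interval has cardinality $\co$; hence $g(E)$ contains no interval.

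The main obstacle is precisely the tension between these two halves: box and packing dimension measure \emph{total} growth across scales, whereas slalom localization is a \emph{coordinatewise} condition, and the marginal of a box\nobreakdash-thin piece can a priori be as large as $2^{s(j+1)}$, dwarfing any fixed slalom width. Overcoming this dictates every design choice above --- super\nobreakdash-exponential $b_j$ so that the marginals stay genuine restrictions with $w_s(j)<b_j$; the strict dimension threshold so that no cover\nobreakdash-dependent constants leak into the width; the diagonal union $n\leq j$ so that countably many thin pieces coalesce into a single slalom of controlled width; and the union $F=\bigcup_s F_s$ so that one set of size $\add\NN$ simultaneously defeats every dimension threshold. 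I would also want to verify carefully the exact form and the width\nobreakdash-robustness of the slalom characterization of $\add\NN$ as transferred to the bounded product $\prod_j b_j$, since that is the one external ingredient the argument leans on.
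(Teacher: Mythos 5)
Your proof is correct and follows essentially the same route as the paper: small packing dimension yields a countable cover by sets with few $2^{-k}$-cylinders, whose coordinate marginals assemble (diagonally) into a slalom localizing $E$, contradicting Bartoszy\'nski's characterization of $\add\NN$. The one point where you diverge --- transferring the witnessing family into the bounded product $\prod_j b_j$ and taking the union $\bigcup_s F_s$ over all widths $w_s$ --- is avoidable and is precisely the ingredient you flag as unverified: the paper works directly in $\omega^\omega$ with a single family of size $\add\NN$ defeating slaloms of width $2^{n^2}$, which eventually dominates every width $2^{(n+1)s}$, so only the standard unbounded form of Bartoszy\'nski's theorem is needed.
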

\begin{lem}\label{lem333}
Let $X$ be a metric space, $E\subs X$ and $s>0$. If $\pack^s(E)=0$,
then there is a sequence
$\seq{I_n}$ of subsets of $E$ such that $\abs{I_n}\leq 2^{ns}$ and
\begin{equation}\label{eq333}
  \forall x\in E\ \forall^\infty n\in\Nset\quad \dist(x,I_n)\leq 2^{-n}.
\end{equation}
\end{lem}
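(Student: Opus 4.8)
The plan is to turn the hypothesis $\pack^s(E)=0$ into a quantitative covering statement. I would split $E$ into countably many pieces whose packing premeasures are summably small, build for each piece a sequence of finite nets whose cardinalities are governed by the premeasure, and then assemble the sets $I_n$ by a diagonal scheme that incorporates more and more pieces as $n$ grows.

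First I would unwind the definition of $\pack^s$: since $\pack^s(E)=0$, there is a countable cover $E=\bigcup_m E_m$ with $\sum_m\pack^s_0(E_m)<\tfrac12$; in particular every $\pack^s_0(E_m)$ is finite. The crucial elementary observation links separated sets to the premeasure: if $P\subs E_m$ is $2^{-n}$-separated (distinct points lie at distance $>2^{-n}$), then $\{B(x,2^{-n}):x\in P\}$ is a $2^{-n}$-fine packing of $E_m$, so $\abs P\cdot2^{-ns}\leq\pack^s_{2^{-n}}(E_m)$. Choosing $P_{m,n}$ to be a \emph{maximal} such set (it exists by Zorn's lemma, and is finite exactly when the right-hand side is finite) guarantees simultaneously that $\abs{P_{m,n}}\leq 2^{ns}\pack^s_{2^{-n}}(E_m)$ and that $P_{m,n}$ is a $2^{-n}$-net of $E_m$, i.e. $\dist(y,P_{m,n})\leq2^{-n}$ for every $y\in E_m$.

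Next I would exploit that $\pack^s_{2^{-n}}(E_m)\downto\pack^s_0(E_m)$ as $n\to\infty$. Fixing a sequence $\seq{\eps_m}$ of positive reals with $\sum_m\eps_m<\tfrac12$, I pick for each $m$ a threshold $N_m$ (also required to satisfy $N_m\geq m$, so that $N_m\to\infty$) with $\pack^s_{2^{-n}}(E_m)<\pack^s_0(E_m)+\eps_m$ whenever $n\geq N_m$. Setting $M_n=\{m:N_m\leq n\}$ yields finite index sets with $M_n\upto\Nset$, and I define $I_n=\bigcup_{m\in M_n}P_{m,n}\subs E$. For the cardinality bound, summing the net estimate over $m\in M_n$ gives $\abs{I_n}\leq 2^{ns}\sum_{m\in M_n}\pack^s_{2^{-n}}(E_m)<2^{ns}\bigl(\sum_m\pack^s_0(E_m)+\sum_m\eps_m\bigr)<2^{ns}$, as required. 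For the covering condition, any $x\in E$ lies in some $E_{m_0}$; once $n\geq N_{m_0}$ we have $m_0\in M_n$, hence $\dist(x,I_n)\leq\dist(x,P_{m_0,n})\leq2^{-n}$, which is precisely $\forinfty n\ \dist(x,I_n)\leq2^{-n}$.

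The one delicate point is pinning the multiplicative constant down to $1$, so that $\abs{I_n}\leq2^{ns}$ rather than merely $C\,2^{ns}$: this is what forces me to start from a cover of total premeasure below $\tfrac12$ and to absorb the approximation errors into a summable sequence $\seq{\eps_m}$ with $\sum_m\eps_m<\tfrac12$, so that the two contributions together stay strictly below $1$. Everything else is routine bookkeeping: the existence and finiteness of the maximal separated sets, the monotone convergence $\pack^s_{2^{-n}}(E_m)\downto\pack^s_0(E_m)$, and the verification that $M_n\upto\Nset$.
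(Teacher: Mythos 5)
Your proof is correct and follows essentially the same route as the paper's: maximal $2^{-n}$-separated subsets of the pieces of a countable cover, the packing premeasure bound $\abs{P}\cdot 2^{-ns}\leq\pack^s_{2^{-n}}$ on their cardinality, and maximality for the covering property. Your threshold bookkeeping with the $N_m$ and $M_n$ merely spells out the paper's one-line assertion that there is a sequence $E_n\upto E$ with $\pack^s_{2^{-n}}(E_n)<1$.
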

\begin{proof}
Since $\pack^s(E)=0$, there is a sequence $E_n\upto E$ such that
$\pack_{2^{-n}}^s(E_n)<1$. Let $I_n\subs E_n$ be a maximal $2^{-n}$-separated set
(i.e., $d(x,y)>2^{-n}$ for all distinct $x,y\in I_n$).
The family of balls $\UU_n=\{B(x,2^{-n}):x\in I_n\}$
is clearly a $2^{-n}$-fine packing of $E_n$.
Consequently $\abs{I_n}\cdot 2^{-ns}\leq\pack_{2^{-n}}^s(E_n)<1$ and it follows that
$\abs{I_n}<2^{ns}$.

Since $\UU_n$ is maximal, it is also a cover of $E_n$. Together with $E_n\upto E$
this yields~\eqref{eq333}.
\end{proof}
\begin{proof}[Proof of Theorem~\ref{add2}]
We will need a deep result from the set theory of reals.
Call a function
$S:\Nset\to[\Nset]^{<\Nset}$ \emph{slalom} if
$\forinfty n\in\Nset$ $\abs{S(n)}\leq 2^{n^2}$.
Bartoszynski~\cite{MR719666} (or see \cite[Theorem 2.3.9]{MR1350295})
and Fremlin~\cite[522M]{fremlin5} (Fremlin calls slalom a \emph{localization relation})
prove that there is a set $E\subs\Pset$ such that $\abs E=\add\NN$
and for every slalom $S$ there is $x\in E$ such that
\begin{equation}\label{eq444}
  \exinfty n\in\Nset\ x(n)\notin S(n).
\end{equation}
We claim that $\pdim E=\infty$.

Suppose the contrary: there is $s>0$ such that $\pack^s(E)=0$.
Let $\seq{I_n}$ be the sequence from Lemma~\ref{lem333} and define
$S(n)=I_{n+1}$. Then $\forinfty n\ \abs{S(n)}\leq 2^{(n+1)s}\leq 2^{n^2}$,
so $S$ is a slalom. Therefore there is $x\in E$ such that \eqref{eq444} holds.
On the other hand, \eqref{eq333} holds, hence
$\forinfty n\ \exists y_n\in I_{n+1}\ d(y_n,x)\leq 2^{-n-1}$,
which in turn means that $\abs{x\wedge y}\geq n+1$ and in particular
$x(n)=y_n(n)$. Since $y_n\in I_{n+1}=S(n)$, we have $\forinfty n\ x(n)\in S(n)$,
which contradicts \eqref{eq444}.
We proved that $\pack^s(E)>0$ for all $s>0$. Hence $\pdim E=\infty$, as required.
\end{proof}

It is clear that Theorems~\ref{add1} and \ref{add2} yield Theorem~\ref{add}:
If $\cov\NN=\co$ holds, then Theorem~\ref{add1} applies.
If $\cov\NN=\co$ fails, then $\add\NN<\co$ holds and thus Theorem~\ref{add2}
applies.

\section{Remarks}\label{sec:rem}

\subsection*{Generalization of the extension theorem}

An inspection of the proof of Theorem~\ref{maly} reveals that we actually
proved a more general statement. Recall that
$\widehat\omega_f(x,r)=\sup_{y\in B(x,r)} d(f(y),f(x))$.
\begin{thm}
Let $X$ be a metric space and $F\subs X$ a closed subset.
Then every continuous function $f:F\to[0,1]$ has an extension
$f^*:X\to[0,1]$ such that $\widehat\omega_{f^*}(x,r)\leq\widehat\omega_f(x,3r)$
for all $x\in F$ and $f^*$ is locally Lipschitz on $X\setminus F$.
\end{thm}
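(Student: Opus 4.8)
The plan is to observe that the function $f^*$ constructed in the proof of Theorem~\ref{maly} already witnesses the present statement, and that the proof given there invoked only continuity of $f$, never finiteness of $\lip\! f$. Since $f\colon F\to[0,1]$ is bounded with values already in $[0,1]$, I would repeat the construction verbatim: for $x\in F$ and $r>0$ put $s_{x,r}=\sup f(B(x,3r)\cap F)$, define the $\frac1r$-Lipschitz functions $f_{x,r}$ by the same two-case formula, and set $f^*(y)=\min\{\inf\{f_{x,r}(y):x\in F,\ r>0\},1\}$.

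First I would re-examine Claim~1, that $f^*$ extends $f$. The upper bound $f^*(y)\le f_{y,r}(y)=s_{y,r}\le f(y)+\widehat\omega_f(y,3r)$ and the matching lower bound $f^*(y)\ge f(y)$ both go through unchanged; the only passage requiring any regularity of $f$ is letting $r\to0$ in the upper bound, and this uses precisely that $\widehat\omega_f(y,3r)\to0$, i.e., continuity of $f$ at $y\in F$. No Lipschitz-type hypothesis enters.

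The heart of the matter is the oscillation estimate. I would reproduce the three-case argument of Claim~2 to obtain, for $x\in F$, $r>0$ and $y\in B(x,r)$, the inequality \eqref{maly222}, namely $\abs{f^*(y)-f(x)}\le\widehat\omega_f(x,3r)$. Inspecting the three cases $d(x,z)\le3\rho$, $d(x,z)\le3r$, and $d(x,z)\ge\max\{3\rho,3r\}$, one sees that each bound on $f_{z,\rho}(y)$ is a statement purely about the values of $f$ on balls centred in $F$, so nothing about $\lip\! f$ is used. Since $f^*(x)=f(x)$ by Claim~1, taking the supremum over $y\in B(x,r)$ converts \eqref{maly222} into $\widehat\omega_{f^*}(x,r)\le\widehat\omega_f(x,3r)$, which is exactly the asserted bound.

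Finally, the local Lipschitz property on $X\setminus F$ is Claim~3 verbatim: for each $\eps>0$ the function $f^*$ is an infimum of $\frac2\eps$-Lipschitz functions on $X\setminus F^\eps$, hence $\frac2\eps$-Lipschitz there, and every point of $X\setminus F$ lies in some such $X\setminus F^\eps$. I do not expect a genuine obstacle here: the entire content is the bookkeeping observation that Claims~1--3 never used the finiteness of $\lip\! f$, only continuity of $f$ and the boundedness already built into the target $[0,1]$. The one point worth double-checking is that the three cases in Claim~2 remain exhaustive and that the case $d(x,z)\ge\max\{3\rho,3r\}$ still yields $d(y,z)\ge2\rho$ from $d(x,y)\le r$; this is the same elementary triangle-inequality computation as before.
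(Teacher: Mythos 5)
Your proposal is correct and coincides with the paper's own justification: the paper derives this statement precisely by inspecting the proof of Theorem~\ref{maly} and noting that Claims 1--3 use only the continuity of $f$ (via $\widehat\omega_f(y,3r)\to0$) and never the finiteness of $\lip f$. Your case-by-case verification, including the triangle-inequality check that $d(y,z)\ge2\rho$ in the third case, matches the original argument exactly.
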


\subsection*{Sharpness of the extension theorem}

In Theorem~\ref{maly}, the assumption that $F$ is closed cannot be
abandoned. The following theorem comes from~\cite[3.17]{BHRZ}:

\emph{For any perfect nowhere dense set $E\subs[0,1]$ there is a continuous function
$f:[0,1]\to[0,1]$ that is constant on each connected component of $G=[0,1]\setminus E$
and yet $\lip\!f(x)=\infty$ for each $x\in E$.}

It follows that $f\upharpoonright G$ is lower Lipschitz. Since $G$ is dense,
the only continuous extension of $f\upharpoonright G$ over $[0,1]$ is $f$.
Thus $f$ has no little Lipschitz extension.
\medskip

We may also ask if the extension can be constructed to be lower Lipschitz
if $f$ is lower Lipschitz. The answer is negative.
\begin{ex}
There is a lower Lipschitz function $f:C\to[0,1]$ defined
on a compact set $C\subs[0,1]$ that has no lower Lipschitz extension
$f^*:[0,1]\to[0,1]$.
\end{ex}
Such a function is constructed
in~\cite{HPZZ}. It is lower Lipschitz and $\hdim C<\hdim f(C)$.
If $X$ had a lower Lipschitz extension $f^*:[0,1]\to[0,1]$, then it would be
Lipschitz; this follows from \cite{HPZZ} where it is shown that a lower Lipschitz
function on an interval is Lipschitz. But if it were so, then we would have,
as for any Lipschitz function, $\hdim f(C)\leq\hdim C$.

%

\subsection*{Necessity vs.~sufficiency of $\pack^n(X)>0$}

It is easy to show that $\pack^n(S)>0$ is a necessary condition for
Theorem~\ref{main2}:
\begin{prop}
Let $X$ be an analytic space. If $X$ maps by a little Lipschitz function
onto $[0,1]^n$, then $\pack^n(X)>0$.
\end{prop}
\begin{proof}
The following integral inequality is proved in~\cite{HPZZ}.
Let $s>0$. Denote by $\hm^s$ the $s$-dimensional Hausdorff measure.
\emph{If $f:X\to Y$ is little Lipschitz, then
$\hm^s(f(X))\leq\int_X(\lip_s f)^s\,\mathrm{d}\pack^s$.}

Suppose that $\pack^n(X)=0$ and that $f:X\to[0,1]^n$ is little Lipschitz.
Let $s=n$ in the inequality to get
$\hm^n(f(X))\leq\int_X(\lip_s f)^n\,\mathrm{d}\pack^n=0$. It follows that
$f(X)$ is a Lebesgue null set.
\end{proof}

There is a very complicated and obscure example~\cite{MR0154965}
(that was reworked and clarified by Keleti~\cite{MR1313694}) of a compact set $C$
in the plane that has positive $1$-dimensional Hausdorff measure
and yet $C$ does not map onto an interval by any Lipschitz function.
It is not clear though if there is a similar example with $\pack^1(C)>0$
and yet $C$ does not map onto an interval by any little Lipschitz function.
In other words, we do not know if $\pack^n(C)>0$ is sufficient
in Theorem~\ref{main2}, even in the case $n=1$.
\begin{question}
Is there $n$ and a compact space $X$ such that $\pack^n(X)>0$
and $X$ does not map onto $[0,1]^n$ by a little Lipschitz function?
\end{question}

%
\bibliographystyle{amsplain}
\providecommand{\bysame}{\leavevmode\hbox to3em{\hrulefill}\thinspace}
\providecommand{\MR}{\relax\ifhmode\unskip\space\fi MR }
\providecommand{\MRhref}[2]{%
  \href{http://www.ams.org/mathscinet-getitem?mr=#1}{#2}
}
\providecommand{\href}[2]{#2}

\end{document}